\setlist[itemize]{leftmargin=20pt}
\renewcommand{\emptyset}{\varnothing}
\newtheorem{theorem}{Theorem}
\newtheorem{lemma}[theorem]{Lemma}
\theoremstyle{remark}
\theoremstyle{definition}
\newtheorem{definition}[theorem]{Definition}
\numberwithin{theorem}{section}
\numberwithin{equation}{section}
\title[Characterizations of $A_\infty$ Weights]{Characterizations of $A_\infty$ Weights in Ergodic Theory}
\author[W. Chen]{Wei Chen}
\address{ School of Mathematical Sciences, Yangzhou University, Yangzhou 225002, China}
\email {weichen@yzu.edu.cn}
\author[J. Y. Wang]{Jingyi Wang}
\address{ School of Mathematical Sciences, Yangzhou University, Yangzhou 225002, China}
\email {jywang$\_$yzu@163.com}
\thanks{W. Chen is supported by the National Natural Science Foundation of China(11971419, 12271469).}
\begin{document}
 \begin{abstract}We establish a discrete weighted version of Calder\'{o}n-Zygmund decomposition from the perspective of dyadic grid in ergodic theory. Based on the decomposition, we study discrete $A_\infty$ weights. First, characterizations of
the reverse H\"{o}lder's inequality and their extensions are obtained. 
Second, the properties of $A_\infty$ are given, specifically $A_\infty$ implies the reverse H\"{o}lder's inequality. Finally, 
under a doubling condition on weights, $A_\infty$ follows from the reverse H\"{o}lder's inequality. This means that we obtain equivalent characterizations of $A_{\infty}$. Because $A_{\infty}$ implies the doubling condition, it seems reasonable to assume the condition. 
\end{abstract}

\keywords{weight, reverse H\"{o}lder's inequality, doubling condition, ergodic transformation, median}

\subjclass[2010]{Primary: 28D05; Secondary: 37A46}


\maketitle
\section{Introduction}Let $\omega$ be a non-negative measurable function on $\mathbb{R}^n$ and let $\mu$ be Lebesgue measure. The function $\omega$ is said to be an $A_p$ weight with $p>1,$ if there exists a constant $C$ for all cubes $Q$ such that
$$
\bigg( \frac{1}{|Q|} \int_Q \omega \, d\mu \bigg) \bigg( \frac{1}{|Q|} \int_Q \omega^{-\frac{1}{p-1}} \, d\mu  \bigg)^{p-1} \leq C.
$$
This kind of weight can be probably traced back to \cite{MR160073}, where its analogue was used to studied the summability of Fourier series. Muckenhoupt \cite{MR293384} observed an open property $A_p=\cup_{1<q<p}A_q$ and characterized the boundedness of the Hardy-Littlewood maximal operator in terms of $A_p.$ In addition, Muckenhoupt \cite{MR350297} defined an $A^M_\infty$ weight as follows: there exist $0<\varepsilon,~\delta<1$ such that if $Q$ is a cube, $E\subseteq Q$ and $|E| < \delta |Q|,$ then $\omega(E) < \varepsilon \omega(Q).$ It was shown that $A^M_{\infty}=\cup_{p>1}A_p.$ 
Around the same time,
Coifman and Fefferman \cite{MR358205} introduced an $A^{CF}_\infty$ weight and proved $A^{CF}_{\infty}=\cup_{p>1}A_p,$ where the $A^{CF}_\infty$ weight $\omega$ is defined as follows: there exist $C,~\delta>0$ such that for all $E\subseteq Q$
$$
\frac{\omega(E)}{\omega(Q)} \leq C \bigg( \frac{|E|}{|Q|} \bigg)^\delta.
$$
They defined the reverse H\"{o}lder's inequality, i.e., there exist $C,~q>1$ for all cubes $Q$ such that
$$			 \bigg( \frac{1}{|Q|} \int_Q \omega^{q} \, d\mu  \bigg)^{q} \leq C\bigg( \frac{1}{|Q|}\int_Q \omega \, d\mu \bigg),
$$ 
which was used to prove $A^{CF}_{\infty}=\cup_{p>1}A_p.$
The type of reverse H\"{o}lder's inequality also appeared in the articles of Muckenhoupt \cite{MR293384} and Gehring \cite{MR402038}. Using the inequality,
Muckenhoupt proved the open property of $A_p,$ and Gehring studied partial differential equations and quasi-conformal mappings. 
Later, a condition $A_{\infty}^{exp}$ defined as a limit case of the $A_p$ weight as $p\uparrow\infty$ was studied almost simultaneously in \cite{MR727244} and \cite[p.405]{MR807149}. They 
showed that $\cup_{p>1}A_p=A^{exp}_{\infty}.$ These characterizations of $\cup_{p>1}A_p$ for cubes on $\mathbb{R}^n$ with Lebesgue measure systematically studied in the book of Grafakos \cite[Theorem 7.3.3]{MR3243734}, which contained several other characterizations. 

Besides the setting of cubes on $\mathbb{R}^n$ with Lebesgue measure, Orobitg and P\'erez \cite{MR1881028} gave a substantial analogue of the equivalent theory of $\cup_{p>1}A_p$ when the underly measure is nondoubling but satisfies the faces (or edges) of the cubes
have measure zero.
Recently, Duoandikoetxea, Mart\'{\i}n-Reyes and Ombrosi \cite{MR3473651} compared and discussed much more characterizations of $\cup_{p>1}A_p$ in the setting of general bases, where they established either the truth or falsity of most of the implications between them.  All the unsolved cases were dealt with by Kosz  \cite{MR4446233}.

Motivated by the above work, 
\cite{MR4745061} studied several characterizations of $A_{\infty}$ weights in the setting of martingales. It is well known that analogues (\cite{MR436313}) of the  $A_p$ theory have been developed in this context, but the open property $A_p=\cup_{1<q<p}A_q$ is false in general, because Bonami and L\'{e}pingle \cite{MR544802} showed that for any $p>1,$ there exists a weight $\omega\in A_p,$ but $\omega\notin A_{p-\varepsilon}$ for all $\varepsilon>0.$ In addition, conditional expectations are Radon-Nikod\'{y}m derivatives with respect to sub$\hbox{-}\sigma\hbox{-}$fields which have no geometric structures, so new ingredients are needed, such as the weight modulo conditional expectations and the conditional expectation of tailed maximal operators. 

Most recently, Kinnunen and Myyryl\"ainen (\cite{MR4720961,2310.00370}) developed the topic in the parabolic case. 
The parabolic Muckenhoupt weights are motivated by one
sided maximal functions and a doubly nonlinear parabolic 
partial differential equation of $p$-Laplace type. Applying an uncentered parabolic 
maximal function with a time lag, they \cite{MR4720961} studied characterizations for the parabolic 
$A_\infty$ classes, where several parabolic Calder\'{o}n-Zygmund decompositions, covering and chaining arguments 
appeared. Subsequently they continued the discussion of parabolic H\"{o}lder's inequalities
and Muckenhoupt weights in \cite{2310.00370}, where the challenging features were related to the parabolic geometry and the time lag. 

In this paper, we study discrete version of $A_\infty$ weights in ergodic theory. The theory of weighted inequalities have been extensively in \cite{MR0675431,MR837798,MR782660,MR713735,MR3771482,MR958045,MR1021892,MR3951077,MR3742545}, but 
little is known in the study of $A_\infty$ weights. This paper is an attempt to create the missing theory.  
First, characterizations of
the reverse H\"{o}lder's inequality and their extensions are obtained in Theorems \ref{sThm_equa_RH} and \ref{Thm_equa_RH}, respectively. 
The latter is the bridge between $\cup_{q>1}RH_{q}$ and $\cup_{p>1}A_{p}.$
Second, the properties of $A_\infty$ are given in Theorems \ref{Thm:equa}, \ref{thm:exp-s} and \ref{thm:imp2}, specifically $A_\infty$ implies the reverse H\"{o}lder's inequality. Finally, 
under a doubling condition on weights, we prove that the reverse H\"{o}lder's inequality implies $A_\infty$ in Theorem $\ref{Thm_RH_AP}.$ This means that we obtain equivalent characterizations of $A_{\infty}.$ Our conclusions are shown in Figure \ref{figure_all}.

At the end of the section, we present new ingredients of this paper: the discrete weighted version of Calder\'{o}n-Zygmund decomposition and the doubling condition. The former is constructed from the perspective of dyadic grid. In view of Lemma \ref{lem-dou}, $A_p$ implies the latter, so it seems reasonable to assume the doubling condition. 

\section{Preliminaries}\label{preli_er}
Let $(X,~\mathcal{F},~\mu)$ be a nonatomic, complete probability space and let $T$ be a point transformation mapping from $X$ onto itself. A set $A$ is called invariant if $T(A)=A.$ Then we say that $T$ is ergodic if the only invariant sets are $X$ and $\emptyset.$ Further, $T$
is measure preserving if $\mu(A) = \mu(T^{-1}A).$ In this paper, our transformations are measurable, invertible, ergodic, and measure preserving.

To say that $g$ is a weight means that $g$ is a measurable function with $g>0$ and $\int_{X}gd\mu>0.$ Without loss of generality, we may assume
$\int_{X}gd\mu=1$ since otherwise we can replace $g$ by $g/{\int_{X}gd\mu}.$ For a measurable function $f,$ the weighted averages relative to $g$ are defined by
$$T_{0,k-1}^gf(x)=\left(\sum\limits_{i=0}^{k-1}g(T^{i} x)\right)^{-1}\left(\sum_{i=0}^{k-1} f(T^{i} x)g(T^{i} x)\right),$$   
where $k\in \mathbb{N}$ and $x\in X.$ 

Let us call a set $I$ of consecutive integers an interval and denote the number of integers in $I$ by $\sharp I.$ The more general weighted averages relative to $g$ are defined by
$$T_{I}^gf(x)=\left(\sum\limits_{i\in I}g(T^{i} x)\right)^{-1}\left(\sum_{i\in I} f(T^{i} x)g(T^{i} x)\right),$$ 
where $I$ is an interval and $x\in X.$ Splitting 
$0,1,$ $\ldots, k-1$ into two disjoint intervals  $I_{l}=\{0,1, \ldots\ldots,  [(k-1) / 2]\}$ and $I_{r}=\{[(k-1) / 2]+1, \ldots\ldots,  k-1\},$ we call $I$ the parent interval of $I_{l}$ and $I_{r}.$ In addition, $I_{l}$ and $I_{r}$ are called the left and right children of $I.$
For convenience, we denote $I$ by $\tilde{I_l}$ or $\tilde{I_r}.$ It is easy to check that $\frac{\sharp I}{\sharp I_{r}}\leq\frac{\sharp I}{\sharp I_{l}}\leq 3$ with $\sharp I\geq2.$ Then we have Lemma \ref{gel_CZ} which is the discrete weighted version of Calder\'{o}n-Zygmund decomposition with a fixed $x\in X.$ 

\begin{lemma}\label{gel_CZ} Let $g,~\omega$ be weights and $k\geq 2$.
 Suppose that $\lambda$  is a real number such that
$$\lambda>T_{0,k-1}^g\omega(x),$$
where  $x$  is a fixed point of ~$X$. Then for the set of integers  $0,1,2, \ldots,k-1$  we can choose a (possibly empty) family of intervals  $I_{1}, \ldots, I_{s}$  such that the following holds:
\begin{enumerate}[(a)]
\item \label{G_C_Z_a} For each  $I_{i}, i=1,2, \ldots, s$
$$\lambda< T_{I_i}^g\omega(x) \leqslant C(T,g,I_i,x)\lambda,$$
where  $C(T,g,I_i,x)=\left(\sum_{j \in I_{i}}g(T^{j} x)\right)^{-1}\left(\sum_{j\in\tilde{I_{i}}}g(T^{j} x)\right)
$.

\item \label{G_C_Z_b} If  $j \notin \bigcup\limits_{i=1}^{s} I_{i},$~$0 \leq j \leq k-1 ,$ then  $\omega\left(T^{j} x\right) \leq \lambda $.
\end{enumerate}
\end{lemma}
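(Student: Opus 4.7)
The plan is to run a dyadic Calder\'on--Zygmund stopping-time selection on the binary tree of subintervals of $\{0,1,\ldots,k-1\}$ generated by the paper's parent-to-children splitting $I\mapsto(I_l,I_r)$, rooted at $I_0=\{0,\ldots,k-1\}$ and terminating at singletons. Starting from $I_0$, the hypothesis $T_{I_0}^g\omega(x)<\lambda$ says the root is not bad, so I descend to its two children. For each non-root interval $J$ reached during the descent I apply the test $T_J^g\omega(x)>\lambda$: if the test holds, $J$ is declared \emph{selected} and the recursion is stopped along that branch; if the test fails and $\sharp J\geq 2$, I descend into the two children of $J$ and continue; if the test fails and $\sharp J=1$, the branch terminates without selection. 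Enumerate the selected intervals as $I_1,\ldots,I_s$ (possibly $s=0$). They are pairwise disjoint because once an interval is selected the recursion never enters its descendants, and any two nodes of the dyadic tree that are not in an ancestor--descendant relation correspond to disjoint subintervals of $\{0,\ldots,k-1\}$.

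For \eqref{G_C_Z_a}, the lower bound is immediate from the selection criterion. For the upper bound, every selected $I_i$ has a parent $\tilde{I_i}$ which was inspected earlier and was \emph{not} selected---either $\tilde{I_i}=I_0$ and the non-selection is the standing hypothesis, or $\tilde{I_i}$ was tested during the recursion and the recursion descended into $I_i$ precisely because that test failed. In either case $T_{\tilde{I_i}}^g\omega(x)\leq\lambda$. Since $\omega g\geq 0$ and $I_i\subseteq\tilde{I_i}$,
\begin{equation*}
\sum_{j\in I_i}\omega(T^jx)g(T^jx)\leq \sum_{j\in\tilde{I_i}}\omega(T^jx)g(T^jx),
\end{equation*}
and dividing both sides by $\sum_{j\in I_i}g(T^jx)$ and using the definition of $T_{\tilde{I_i}}^g\omega(x)$ yields
\begin{equation*}
T_{I_i}^g\omega(x)\leq C(T,g,I_i,x)\,T_{\tilde{I_i}}^g\omega(x)\leq C(T,g,I_i,x)\lambda,
\end{equation*}
which is the required upper bound. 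For \eqref{G_C_Z_b}, suppose $0\leq j\leq k-1$ does not lie in any $I_i$. Following the unique path from $I_0$ down to the singleton $\{j\}$ in the dyadic tree, no node on this path can have been selected (otherwise $j$ would lie in some $I_i$), so the recursion continued all the way to $\{j\}$ and the test failed there, giving $\omega(T^jx)=T_{\{j\}}^g\omega(x)\leq\lambda$.

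This is essentially the classical Calder\'on--Zygmund dyadic argument transplanted to integer intervals, and no serious obstacle is anticipated. The only subtlety is that, since the weight $g$ need not be uniformly distributed over $\{T^jx:0\leq j\leq k-1\}$, the ratio $\sum_{j\in\tilde{I_i}}g(T^jx)/\sum_{j\in I_i}g(T^jx)$ cannot be controlled by a universal constant depending only on the geometric ratio $\sharp\tilde{I_i}/\sharp I_i\leq 3$; this is why the upper bound in \eqref{G_C_Z_a} must be stated with the data-dependent constant $C(T,g,I_i,x)$ rather than an absolute multiple of $\lambda$.
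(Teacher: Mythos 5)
Your proposal is correct and follows essentially the same dyadic stopping-time argument as the paper: select a child interval the first time its weighted average exceeds $\lambda$, obtain the upper bound in \eqref{G_C_Z_a} by comparing the sum over $I_i$ with the sum over its non-selected parent $\tilde{I_i}$, and deduce \eqref{G_C_Z_b} from the failure of the test at the surviving singletons. Your write-up merely makes explicit some points the paper leaves implicit (disjointness of the selected intervals and the reason the constant must depend on $g$, $I_i$ and $x$).
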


\begin{proof}[Proof of Lemma \ref{gel_CZ}]	For $k\geq2,$ we consider
$$\left(\sum_{j \in I_{i}}g(T^{j} x)\right)^{-1}\left(\sum_{j \in I_{i}} \omega(T^{j} x)g(T^{j} x)\right), \quad i=l,~r .$$
If this average is bigger than  $\lambda,$ we select this interval and we have
\begin{eqnarray*}
&~&\left(\sum_{j \in I_{i}}g(T^{j} x)\right)^{-1}\left(\sum_{j \in I_{i}} \omega(T^{j} x)g(T^{j} x)\right) \\
&\leqslant& \left[\left(\sum_{j \in I_{i}}g(T^{j} x)\right)^{-1}\left(\sum_{j \in\tilde{ I_{i}}}g(T^{j} x)\right)\right]\times\\&&\times\left[\left(\sum_{j \in\tilde{ I_{i}}}g(T^{j} x)\right)^{-1}\left(\sum_{j \in\tilde{ I_{i}}}\omega(T^{j} x)g(T^{j} x)\right)\right]\\&\leq& \left[\left(\sum_{j \in I_{i}}g(T^{j} x)\right)^{-1}\left(\sum_{j \in\tilde{ I_{i}}}g(T^{j} x)\right)\right]\lambda.
\end{eqnarray*}
If this average is not bigger than  $\lambda$ , we repeat the process. This process will finish in a finite number of steps. The chosen intervals satisfy $\ref{G_C_Z_a}$ and if an integer $t$ is left out, then obviously
$$\omega(T^{t} x) \leqslant \lambda .$$
\end{proof}

For $g\equiv1,$ $T_{0,k-1}^gf(x)$ reduces to the standard averages $k^{-1} \sum_{i=0}^{k-1} f(T^{i} x)$ (see, e.g., Jones \cite{MR430208}) which are denoted by $T_{0,k-1}f(x).$ Jones \cite[Theorem 2.1]{MR430208} established a Calder\'{o}n-Zygmund decomposition in the variable $x\in X$ which is different from Lemma \ref{C_Z}. Letting $g\equiv1$ in Lemma \ref{gel_CZ}, we have $C(T,g,I_j,x)\leqslant3.$ Thus
we have the following Lemma \ref{C_Z} which appeared in \cite[p. 39]{MR0675431}.

\begin{lemma}\label{C_Z}Let $\omega$ be a weight and $k\geq 2$.
 Suppose that $\lambda$  is a real number such that
$$\lambda> T_{0,k-1}\omega(x),$$
where  $x$  is a fixed point of ~$X$. Then for the set of integers  $0,1,2, \ldots,k-1$  we can choose a (possibly empty) family of disjoint subsets  $I_{1}, \ldots, I_{l}$  each of them made up of consecutive integers and such that the following holds:
\begin{enumerate}[(a)]
\item \label{C_Z_a} For each  $I_{i}, i=1,2, \ldots, s,$
$$\lambda<T_{I_i}\omega(x) \leq 3 \lambda.$$

\item \label{C_Z_b} If  $j \notin \bigcup\limits_{i=1}^{s} I_{i},$~$0 \leq j \leq k-1 ,$ then  $\omega\left(T^{j} x\right) \leq \lambda $.
\end{enumerate}
\end{lemma}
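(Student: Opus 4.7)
The plan is to derive Lemma \ref{C_Z} as a direct specialization of Lemma \ref{gel_CZ} with the constant choice $g\equiv 1$. First I would verify that $g\equiv 1$ qualifies as a weight in the sense of Section 2: it is measurable, strictly positive, and $\int_X g\,d\mu = \mu(X)=1$ because $(X,\mathcal F,\mu)$ is a probability space, so the normalization convention is already satisfied and no rescaling is needed. With this choice, the weighted average $T_I^g f(x)$ collapses to $(\sharp I)^{-1}\sum_{j\in I} f(T^j x)=T_I f(x)$, so the hypothesis $\lambda>T_{0,k-1}^g\omega(x)$ of Lemma \ref{gel_CZ} matches the hypothesis $\lambda>T_{0,k-1}\omega(x)$ here, and conclusion \ref{G_C_Z_b} transfers verbatim to \ref{C_Z_b}.

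The only substantive step is to bound the constant $C(T,g,I_i,x)$ from \ref{G_C_Z_a} when $g\equiv 1$. The defining ratio simplifies to $\sharp\tilde{I_i}/\sharp I_i$, namely the size of the parent interval divided by the size of the selected child. Each interval $I_i$ produced by the bisection stopping-time construction is either the left or the right child of its parent $\tilde{I_i}$, and the preliminaries record that both $\sharp\tilde I/\sharp I_l$ and $\sharp\tilde I/\sharp I_r$ are bounded by $3$ whenever $\sharp\tilde I\geq 2$. Hence $C(T,g,I_i,x)\leq 3$ uniformly in $i$, and \ref{G_C_Z_a} of Lemma \ref{gel_CZ} yields $\lambda<T_{I_i}\omega(x)\leq 3\lambda$, which is precisely \ref{C_Z_a}.

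There is essentially no real obstacle: the dyadic stopping-time argument has already been executed inside Lemma \ref{gel_CZ}, and the present statement is exactly what one obtains by replacing the $g$-weighted reference measure on $\{0,\dots,k-1\}$ by the uniform one. Should an independent proof be preferred, one could repeat the same bisection argument directly on $\{0,1,\dots,k-1\}$: recursively split each interval into its left and right halves, harvest any child on which the unweighted average of $\omega\circ T^j$ exceeds $\lambda$, and otherwise recurse down to singletons. The bound $T_{I_i}\omega(x)\leq 3\lambda$ then follows because the parent $\tilde{I_i}$ was \emph{not} harvested, so $T_{\tilde{I_i}}\omega(x)\leq\lambda$, combined with $\sharp\tilde{I_i}/\sharp I_i\leq 3$; any integer $j$ not covered by some $I_i$ ends up in an unselected singleton, so $\omega(T^j x)\leq\lambda$ automatically.
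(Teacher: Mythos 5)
Your proposal is correct and takes exactly the route the paper does: the paper obtains Lemma \ref{C_Z} by setting $g\equiv 1$ in Lemma \ref{gel_CZ} and observing that the constant $C(T,g,I_i,x)$ reduces to $\sharp\tilde{I_i}/\sharp I_i\leq 3$. Your verification that $g\equiv 1$ is an admissible normalized weight and that the weighted averages collapse to the standard ones is the same (brief) argument the paper relies on, so there is nothing further to add.
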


\section{Characterizations of the reverse H\"{o}lder's inequality in ergodic theory}\label{Sec-RH}

We study characterizations of the reverse H\"{o}lder's inequality $\cup_{q>1}RH_{q}$ on the probability space $(X,\mathcal{F},\mu).$ This is Theorem \ref{sThm_equa_RH}, which is one of our main resuts. 
\begin{theorem}\label{sThm_equa_RH}Let $w$ be a weight. Then the following statements are
	equivalent.
\begin{enumerate}[\rm (1)]
				
		\item \label{sThme:con}
		There exist $0<\gamma,\delta<1$ such that for $a.e.$~$x$ and for all positive integers $k$ we have
		\begin{equation}\label{sconn}
    	\frac{1}{k}\sharp\Big\{j:0\leq j \leq k-1;\omega(T^{j}x)\leq  \gamma T_{0,k-1}\omega(x)\Big\}\leq\delta,
		\end{equation}
		which is denoted by $\omega\in A^{avg}_{\infty}.$
		
		\item \label{sThm:lambda}There exist $0 < C,\beta< \infty$ such that for $a.e.$~$x$  and for $\lambda>T_{0,k-1}\omega(x)$
        we have
        \begin{equation}\label{slambda1}
    	\sum_{i \in A(\lambda)} \omega(T^{i} x) \leqslant C \lambda  \sharp A(\beta \lambda),
        \end{equation}
        which is denoted by $\omega\in A^{\lambda}_{\infty},$ where $A(\lambda)=\{i: 0 \leqslant i\leqslant k-1; \omega(T^{i}x)>\lambda\}.$
		
		\item \label{sThm:equa_RH} There exist $C,~q>1$ such that for $a.e.$~$x$ and for all positive integers $k$ we have
		\begin{equation}\label{sRH}		
			\Big(\frac{1}{k} \sum\limits_{i=0}^{k-1}\omega^q(T^{i} x)\Big )^{\frac{1}{q}}
			\leqslant C \frac{1}{k} \sum\limits_{i=0}^{k-1} \omega(T^{i} x),
		\end{equation}
		which is the reverse H\"{o}lder's inequality and denoted by $\omega\in \cup_{q>1}RH_{q}$.
				
		\item\label{sThm:d}There exist $1<C<\infty,0<\varepsilon<1$ such that for $a.e.$~$x$, for all positive integers $k$ and all subsets $A$ of $\{0,1,2,\cdots,k-1\}$ we have
		\begin{equation}\label{sd1}
		\frac{	\sum\limits_{i \in A} \omega(T^{i} x)}{\sum\limits_{i=0}^{k-1} \omega(T^{i} x)} \leq C(\frac{\sharp A}{k})^{\varepsilon},
		\end{equation}
	    which is denoted by $\omega\in A^{CF}_{\infty}.$
		
		\item\label{sThm:e}There exist $0<\alpha',\beta'<1$ such that for $a.e.$~$x$, for all positive integers $k$ and all subsets $A$ of $\{0,1,2,\cdots,k-1\}$ we have
		\begin{equation}\label{se1}
		\sum\limits_{i \in A} \omega(T^{i} x) \leq \alpha'\sum\limits_{i=0}^{k-1} \omega(T^{i} x) \Rightarrow \sharp A \leq \beta'k,
		\end{equation}
		which is denoted by $\omega\in \hat{A}^{M}_{\infty}.$

		\item\label{sThm:b}There exist $0<\alpha,\beta<1$ such that for $a.e.$~$x$, for all positive integers $k$ and all subsets $A$ of $\{0,1,2,\cdots,k-1\}$ we have
		\begin{equation}\label{sb1}
			\sharp A \leq \alpha k \Rightarrow	\sum_{i \in A} \omega(T^{i} x) \leq \beta  \sum\limits_{i=0}^{k-1} \omega(T^{i} x),
		\end{equation}
		which is denoted by $\omega\in A^{M}_{\infty}.$		

\item \label{sthm:log}There exists $C>1$ such that for $a.e.$~$x$ and all $k\in \mathbb{N}$ we have
		\begin{equation}\label{sthm:equ_log}
	 \frac{1}{k}\sum\limits_{j=0}^{k-1}\frac{\omega(T^{j}x) }{T_{0, k-1} \omega(x) }\log^+  \frac{\omega(T^{j}x)}{T_{0, k-1} \omega(x)}  \leq C,
		\end{equation}
		which is denoted by $\omega\in A_{\infty}^{log}.$	
	\end{enumerate}
\end{theorem}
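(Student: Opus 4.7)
The plan is to prove Theorem \ref{sThm_equa_RH} by closing a directed cycle
\[(3)\Rightarrow(4)\Rightarrow(6)\Leftrightarrow(5)\Rightarrow(1)\Rightarrow(2)\Rightarrow(3),\]
and to fold in the logarithmic condition via the two additional arrows $(3)\Rightarrow(7)\Rightarrow(6)$. The only substantial new ingredient beyond elementary manipulations is the discrete weighted Calder\'on--Zygmund decomposition of Lemma \ref{C_Z}.

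The short arrows are routine. For $(3)\Rightarrow(4)$, H\"older's inequality on $A\subseteq\{0,\dots,k-1\}$ with exponents $q,q'$ gives $\varepsilon=1/q'$. $(4)\Rightarrow(6)$ chooses $\alpha$ small enough that $C\alpha^\varepsilon<1$, while $(6)\Leftrightarrow(5)$ is the usual contrapositive bookkeeping obtained by passing to the complement in $\{0,\dots,k-1\}$. For $(5)\Rightarrow(1)$, the set $E=\{j:\omega(T^jx)\le\gamma T_{0,k-1}\omega(x)\}$ satisfies $\sum_{j\in E}\omega(T^jx)\le\gamma\sum_{i=0}^{k-1}\omega(T^ix)$, so (5) applied with $\alpha'=\gamma$ produces $\sharp E\le\beta' k=:\delta k$.

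The heart of the argument is $(1)\Rightarrow(2)\Rightarrow(3)$. For $(1)\Rightarrow(2)$, fix $\lambda>T_{0,k-1}\omega(x)$ and apply Lemma \ref{C_Z} to obtain disjoint intervals $I_1,\dots,I_s$ with $\lambda<T_{I_j}\omega(x)\le 3\lambda$ and $\omega(T^ix)\le\lambda$ off $\bigcup_j I_j$; then $A(\lambda)\subseteq\bigcup_j I_j$ and $\sum_{i\in A(\lambda)}\omega(T^ix)\le 3\lambda\sum_j\sharp I_j$. Writing $I_j=\{a_j,\dots,a_j+\sharp I_j-1\}$ and applying (1) at the shifted base point $T^{a_j}x$ on length $\sharp I_j$ (the a.e.\ exceptional set is enlarged to a $T$-invariant null set because $T$ preserves $\mu$), together with $T_{I_j}\omega(x)>\lambda$, yields $\sharp\{i\in I_j:\omega(T^ix)>\gamma\lambda\}\ge(1-\delta)\sharp I_j$. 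Summing over $j$ gives $\sum_j\sharp I_j\le(1-\delta)^{-1}\sharp A(\gamma\lambda)$, producing (2) with $\beta=\gamma$ and $C=3/(1-\delta)$. For $(2)\Rightarrow(3)$, I use the pointwise bound $\omega^{q-1}\le(T_{0,k-1}\omega(x))^{q-1}+(q-1)\int_{T_{0,k-1}\omega(x)}^\infty\lambda^{q-2}\ind_{\omega>\lambda}\,d\lambda$. Multiplying by $\omega$, summing over $i=0,\dots,k-1$, inserting (2), changing variable $\mu=\beta\lambda$, and comparing with the layer cake identity $\sum_i\omega(T^ix)^q=q\int_0^\infty\mu^{q-1}\sharp A(\mu)\,d\mu$ yields
\[\sum_i\omega(T^ix)^q\le(T_{0,k-1}\omega(x))^{q-1}\sum_i\omega(T^ix)+\frac{C(q-1)}{q\beta^q}\sum_i\omega(T^ix)^q.\]
Choosing $q>1$ close enough to $1$ that $C(q-1)/(q\beta^q)<1/2$ absorbs the last term into the left side and produces RH.

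For the logarithmic condition, $(3)\Rightarrow(7)$ applies the elementary bound $x\log^+x\le c_q x^q$ (valid for any fixed $q>1$) to $a_i=\omega(T^ix)/T_{0,k-1}\omega(x)$, since RH rescales to $(1/k)\sum_i a_i^q\le C^q$. For $(7)\Rightarrow(6)$, the pointwise Young bound $a\le a\log^+a/\log b+b$ for $b>1$ gives $\sum_{i\in A}\omega(T^ix)/\sum_i\omega(T^ix)\le C/\log b+b\sharp A/k$; choosing $b$ large enough that $C/\log b<1/4$ and then $\alpha=1/(4b)$ yields (6) with $\beta=1/2$. I anticipate the main obstacle to be the good-$\lambda$ step $(2)\Rightarrow(3)$, where a quantitative choice of $q$ near $1$ depending on $C,\beta$ is needed to absorb the $\sum\omega^q$ term on the right; a secondary delicate point is transferring the a.e.\ hypothesis (1) onto every shifted base point $T^{a_j}x$ arising in the Calder\'on--Zygmund step, handled by $T$-invariance of the null exceptional set.
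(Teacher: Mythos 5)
Your proposal is correct and follows essentially the same route as the paper: the identical implication cycle through conditions (1)--(6) (merely entered at a different vertex and with the block (4),(5),(6) reordered), the same Calder\'on--Zygmund decomposition (Lemma \ref{C_Z}) for $(1)\Rightarrow(2)$, the same layer-cake/absorption argument with $q\downarrow 1$ for $(2)\Rightarrow(3)$, H\"older's inequality for $(3)\Rightarrow(4)$, and the same Young-type estimates for the arrows through $A^{log}_\infty$. The only genuine local differences are welcome simplifications: you obtain $(5)\Leftrightarrow(6)$ by pure contraposition and complementation, replacing the paper's longer direct proof of $(5)\Rightarrow(6)$ via the splitting $S=S_1\cup S_2$; you prove $(3)\Rightarrow(7)$ from the pointwise bound $x\log^+x\le x^q/(q-1)$ rather than the paper's dyadic level sets $E_l$; and you explicitly handle the a.e.\ exceptional set when condition (1) is applied at the shifted base points $T^{a_j}x$ in the Calder\'on--Zygmund step, a point the paper passes over silently.
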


The above classes $A^{avg}_{\infty},$ $A^{\lambda}_{\infty}$ and $A_{\infty}^{log}$ in the context of $\mathbb{R}^n$ appeared in \cite{MR358205}, \cite{MR402038} and \cite{MR481968}, respectively.
  We can directly prove Theorem \ref{sThm_equa_RH}, but we develop and prove its analogue, i.e., Theorem \ref{Thm_equa_RH}. The reason is that the analogue is the bridge between $\cup_{q>1}RH_{q}$ and $\cup_{p>1}A_{p}.$ 

\begin{definition}Let $g$ be a weight. We say that
$g$ satisfies the doubling condition
if there exists a constant $C$ such that for all positive integers $k\geq2$ and $x\in X$,  we have
$$\left(\sum_{j \in I_{i}}g(T^{j} x)\right)^{-1}\left(\sum_{j=0}^{k-1}g(T^{j} x)\right)\leq C,$$
where $I_i$ are children of $\{0,1,\ldots,k-1\}.$
\end{definition}

Let $d\hat{\mu}=gd\mu.$ We study the analogue of Theorem \ref{sThm_equa_RH} relative to $(X,\mathcal{F},\hat{\mu})$ which is Theorem \ref{Thm_equa_RH}.

\begin{theorem}\label{Thm_equa_RH} Let $g$ and $\omega$ be weights. If $g$ satisfies the doubling condition, then the following are
	equivalent.
	
	\begin{enumerate}[\rm (1)]
				
		\item \label{Thme:con} 
		There exist $0<\gamma,\delta<1$ such that for $a.e.$~$x$ and for all positive integers $k$ we have
		\begin{equation}\label{conn}
    \left(\sum_{i=0}^{k-1}g(T^{i} x)\right)^{-1}\left(\sum_{j \in \Gamma_{0,k-1}(\gamma)}g(T^{j} x)\right)\leq\delta,
		\end{equation}
		which is denoted by $\omega\in A^{avg}_{\infty}(g),$ where $$\Gamma_{0,k-1}(\gamma)=\left\{j:0\leq j \leq k-1;\omega(T^{j}x)\leq  \gamma T_{0,k-1}^g\omega(x)\right\}.$$
		
		\item \label{Thm:lambda}There exist $0 < C,\beta< \infty$ such that for $a.e.$~$x$  and for $\lambda>T^g_{0,k-1}\omega(x)$
        we have
        \begin{equation}\label{lambda1}
    	\sum_{i \in A(\lambda)} \omega(T^{i} x)g(T^{i} x) \leq C \lambda  \sum_{i \in A(\beta\lambda)} g(T^{i} x),
        \end{equation}
        which is denoted by $\omega\in A^{\lambda}_{\infty}(g),$ where $A(\lambda)=\{i: 0 \leq i\leq k-1; \omega(T^{i}x)>\lambda\}.$
		
		\item \label{Thm:equa_RH} There exist $C,~q>1$ such that for $a.e.$~$x$ and for all positive integers $k$ we have
		\begin{equation}\label{RH}		
			\Big(T_{0,k-1}^g\omega^q(x)\Big )^{\frac{1}{q}}
			\leqslant C T_{0,k-1}^g\omega(x),
		\end{equation}
		which is the reverse H\"{o}lder's inequality and denoted by $\omega\in \cup_{q>1}RH_{q}(g)$.

		\item\label{Thm:d}There exist $1<C<\infty,0<\varepsilon<1$ such that for $a.e.$~$x$, for all positive integers $k$ and all subsets $A$ of $\{0,1,2,\cdots,k-1\}$ we have
		\begin{equation}\label{d1}
		\frac{\sum\limits_{i \in A} \omega(T^{i} x)g(T^{i} x)}{\sum\limits_{i=0}^{k-1} \omega(T^{i} x)g(T^{i} x)} \leq C\left(\frac{\sum\limits_{i \in A} g(T^{i} x)}{\sum\limits_{i=0}^{k-1} g(T^{i} x)}\right)^{\varepsilon},
		\end{equation}
	    which is denoted by $\omega\in A^{CF}_{\infty}(g).$
		
		\item\label{Thm:e}There exist $0<\alpha',\beta'<1$ such that for $a.e.$~$x$, for all positive integers $k$ and all subsets $A$ of $\{0,1,2,\cdots,k-1\}$ we have
		\begin{equation}\label{e1}
		\sum\limits_{i \in A} \omega(T^{i} x)g(T^{i} x) \leq \alpha'\sum\limits_{i=0}^{k-1} \omega(T^{i} x)g(T^{i} x) \Rightarrow \sum\limits_{i \in A} g(T^{i} x) \leq \beta'\sum\limits_{i=0}^{k-1} g(T^{i} x),
		\end{equation}
		which is denoted by $\omega\in \hat{A}^{M}_{\infty}(g).$

		\item\label{Thm:b}There exist $0<\alpha,\beta<1$ such that for $a.e.$~$x$, for all positive integers $k$ and all subsets $A$ of $\{0,1,2,\cdots,k-1\}$ we have
		\begin{equation}\label{b1}
			\sum\limits_{i \in A} g(T^{i} x) \leq \alpha \sum\limits_{i=0}^{k-1} g(T^{i} x) \Rightarrow \sum\limits_{i \in A} \omega(T^{i} x)g(T^{i} x) \leq \beta  \sum\limits_{i=0}^{k-1} \omega(T^{i} x)g(T^{i} x),
		\end{equation}
		which is denoted by $\omega\in A^{M}_{\infty}(g).$	

		\item \label{thm:log}There exists $C>1$ such that for $a.e.$~$x$ and $k\in \mathbb{N}$ we have
		\begin{equation}\label{thm:equ_log}
	 \frac{1}{\sum_{i=0}^{k-1}g(T^{i} x)}\sum\limits_{j=0}^{k-1}\left(\frac{\omega(T^{j}x) }{T^g_{0, k-1} \omega(x) }\log^+  \frac{\omega(T^{j}x)}{T^g_{0, k-1} \omega(x)}\right)g(T^{j} x)  \leq C,
		\end{equation}
		which is denoted by $\omega\in A_{\infty}^{log}(g).$	
	\end{enumerate}
\end{theorem}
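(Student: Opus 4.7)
The plan is to prove the seven-way equivalence by the cycle $(1)\Rightarrow(2)\Rightarrow(3)\Rightarrow(4)\Rightarrow(5)\Rightarrow(1)$, to establish $(5)\Leftrightarrow(6)$ by direct complementation, and to insert $(7)$ via $(3)\Rightarrow(7)\Rightarrow(6)$. Throughout, the ergodic and measure-preserving nature of $T$ is used as follows: any hypothesis holding for almost every $x$ and every $k$ also holds with $x$ replaced by $T^m x$ and $\{0,\ldots,k-1\}$ replaced by a shifted subinterval $\{m,\ldots,m+\l-1\}$, so each condition may be freely applied on arbitrary subintervals. The main step is $(1)\Rightarrow(2)$: given $\lambda>T^g_{0,k-1}\omega(x)$, apply the weighted Calder\'on-Zygmund decomposition (Lemma \ref{gel_CZ}) to obtain disjoint intervals $I_1,\ldots,I_s$ with $T^g_{I_j}\omega(x)>\lambda$ and $\omega(T^i x)\le\lambda$ off $\bigcup_j I_j$, so $A(\lambda)\subset\bigcup_j I_j$. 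The doubling condition on $g$ bounds $C(T,g,I_j,x)$ by a universal constant, yielding $\sum_{i\in A(\lambda)}\omega g\le C\lambda\sum_j\sum_{i\in I_j}g$; applying $A^{avg}_\infty(g)$ on each $I_j$ then forces at least a $(1-\delta)$-fraction of the $g$-mass of $I_j$ into $A(\gamma\lambda)$, and summing over the disjoint $I_j$ delivers $(2)$ with $\beta=\gamma$.

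For $(2)\Rightarrow(3)$ I invoke the layer-cake identity $\sum_i\omega^q g=(q-1)\int_0^\infty\lambda^{q-2}\sum_{i\in A(\lambda)}\omega g\,d\lambda$, split at $\lambda=T^g_{0,k-1}\omega(x)$, estimate the lower range trivially by the $g$-average and the upper range by $(2)$, then substitute $\mu=\beta\lambda$; for $q$ sufficiently close to $1$ the self-referential term $\frac{C(q-1)}{\beta^q q}\sum\omega^q g$ is absorbed, giving $RH_q$ with an explicit constant. Next, $(3)\Rightarrow(4)$ is H\"older's inequality with respect to the $g$-weighted counting measure combined with $RH_q$, producing exponent $\varepsilon=1/q'$. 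For $(4)\Rightarrow(5)$, fix $\alpha'=1/2$; if $\sum_A\omega g\le\alpha'\sum\omega g$, then $\sum_{A^c}\omega g\ge(1-\alpha')\sum\omega g$, and applying $(4)$ to $A^c$ forces $\sum_{A^c}g/\sum g\ge((1-\alpha')/C)^{1/\varepsilon}$, whence $\sum_A g\le\beta'\sum g$ with $\beta':=1-((1-\alpha')/C)^{1/\varepsilon}<1$. For $(5)\Rightarrow(1)$, on $\Gamma_{0,k-1}(\gamma)$ the bound $\omega\le\gamma T^g_{0,k-1}\omega$ gives $\sum_\Gamma\omega g\le\gamma\sum\omega g$, and choosing $\gamma=\alpha'$ triggers $(5)$ to deliver $(1)$ with $\delta=\beta'$. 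The equivalence $(5)\Leftrightarrow(6)$ is routine complementation.

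For $(7)$: the direction $(3)\Rightarrow(7)$ follows from the elementary inequality $t\log^+t\le t^q/(q-1)$ applied to $t=\omega/T^g_{0,k-1}\omega$, summed against $g$, and bounded by $RH_q$. For $(7)\Rightarrow(6)$, setting $M=T^g_{0,k-1}\omega$ and splitting at a threshold $t>1$ yields $\sum_A(\omega/M)g\le t\sum_A g+(\log t)^{-1}\sum(\omega/M)\log^+(\omega/M)g\le t\sum_A g+C\sum g/\log t$; optimizing over $t$ (e.g.\ $t=(\sum g/\sum_A g)^{1/2}$) shows $\sum_A\omega g/\sum\omega g\to 0$ as $\sum_A g/\sum g\to 0$, which is $(6)$. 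The main obstacle is controlling the parent-to-child ratio $C(T,g,I_j,x)$ appearing in the weighted CZ decomposition: a priori this constant could blow up as one descends the bisection tree and the argument $(1)\Rightarrow(2)$ would collapse. The doubling condition on $g$ is precisely the hypothesis that keeps this ratio uniformly bounded at every level of the recursion, and once it is secured the remainder of the cycle reduces to standard distribution-function, H\"older, and $L\log L$ manipulations.
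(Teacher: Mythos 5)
Your proposal is correct and follows essentially the same route as the paper's proof: the weighted Calder\'on--Zygmund decomposition of Lemma \ref{gel_CZ} combined with the doubling condition for $\ref{Thme:con}\Rightarrow\ref{Thm:lambda}$, the layer-cake/absorption argument for $\ref{Thm:lambda}\Rightarrow\ref{Thm:equa_RH}$, H\"older's inequality for $\ref{Thm:equa_RH}\Rightarrow\ref{Thm:d}$ (with the same exponent $\varepsilon=(q-1)/q$), and complementation for $\ref{Thm:d}\Rightarrow\ref{Thm:e}$. The only deviations are cosmetic: you close the cycle via $\ref{Thm:e}\Rightarrow\ref{Thme:con}$ directly and get \ref{Thm:b} by complementing \ref{Thm:e} (the paper instead proves $\ref{Thm:e}\Rightarrow\ref{Thm:b}$ by splitting the complement into $S_1,S_2$ and then $\ref{Thm:b}\Rightarrow\ref{Thme:con}$), and your $\ref{Thm:equa_RH}\Rightarrow\ref{thm:log}\Rightarrow\ref{Thm:b}$ uses $t\log^+t\le t^q/(q-1)$ and a threshold split where the paper uses dyadic level sets and the inequality $ab\le a\log a-a+e^b$; all of these variants are sound.
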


\begin{proof}[Proof of Theorem \ref{Thm_equa_RH}] We shall follow the schemes:
$$\ref{Thme:con}\Rightarrow\ref{Thm:lambda}\Rightarrow\ref{Thm:equa_RH}\Rightarrow\ref{Thm:d}\Rightarrow\ref{Thm:e}\Rightarrow\ref{Thm:b}\Rightarrow\ref{Thme:con}$$
and
$$\ref{Thm:equa_RH}\Rightarrow\ref{thm:log}\Rightarrow\ref{Thm:b}.$$

$\ref{Thme:con}\Rightarrow\ref{Thm:lambda}$ Let $\beta=\gamma$ and $\alpha=1-\delta.$ We obtain that \ref{Thme:con} is equivalent to
		\begin{equation*}
\left(\sum_{i=0}^{k-1}g(T^{i} x)\right)^{-1}\left(\sum_{j \in \Gamma^c_{0,k-1}(\beta)}g(T^{j} x)\right)>\alpha,
		\end{equation*}
where $$\Gamma^c_{0,k-1}(\beta)=\left\{j:0\leq j \leq k-1;\omega(T^{j}x)>  \beta T_{0,k-1}^g\omega(x)\right\}.$$
Then 
\begin{equation*}
\left(\sum_{i\in I}g(T^{i} x)\right)^{-1}\left(\sum_{j \in \Gamma^c_{I}(\beta)}g(T^{j} x)\right)>\alpha,
		\end{equation*}
where $\Gamma^c_{I}(\beta)=\left\{j:j\in I;\omega(T^{j}x)>  \beta T_{I}^g\omega(x)\right\}$
and $I$ is any other interval instead of $0\leq i \leq k-1.$

 Let  $\lambda$  be a positive number such that
$$\lambda>T^g_{0, k-1} \omega(x).$$
We estimate $ \sum\limits_{i\in A(\lambda)} \omega(T^{i} x)g(T^{i} x).$ Using the discrete weighted version of Calder\'{o}n-Zygmund decomposition Lemma \ref{gel_CZ} for this  $\lambda$, we have a family of disjoint intervals  $I_{j}$ satisfying \ref{G_C_Z_a} and \ref{G_C_Z_b} of the said decomposition, so
\begin{eqnarray*}
A(\lambda) =\{i: 0 \leq  i\leq  k-1; \omega(T^{i}x)>\lambda\}\subset \cup_{j} I_{j},
\end{eqnarray*}
where we have used \ref{G_C_Z_b}. 
Using the doubling condition, we obtain that
\begin{eqnarray*}
	\sum_{i \in A(\lambda)} \omega(T^{i}x)g(T^{i}x) &\leq & \sum_{j} \sum_{i \in I_{j}} \omega(T^{i}x)g(T^{i}x) \\
     &\leq & C\sum_{j}  \lambda\left(\sum_{i\in  I_{j}}g(T^{i} x)\right)\\
	&\leq & C \lambda \sum_{j} \alpha^{-1} \left(\sum_{i \in \Gamma^c_{I_j}(\beta)}g(T^{i} x)\right)\\
	&=& C\alpha^{-1} \lambda  \sum_{j} \left(\sum_{i \in \Gamma^c_{I_j}(\beta)}g(T^{i} x)\right).
\end{eqnarray*}
It follows from \ref{G_C_Z_a} that 
\begin{eqnarray*}
\Gamma^c_{I_j}(\beta)&=&\left\{i:i\in I_j;\omega(T^{i}x)>  \beta T_{I}^g\omega(x)\right\}\\
&\subset&\left\{i:i\in I_j;\omega(T^{i}x)>  \beta \lambda\right\}.\end{eqnarray*}
Thus 
\begin{eqnarray*}
	\sum_{i \in A(\lambda)} \omega(T^{i}x)g(T^{i}x) 
 &\leq&C\alpha^{-1} \lambda  \sum_{j} \left(\sum_{i \in \Gamma^c_{I_j}(\beta)}g(T^{i} x)\right)\\
    	&\leq& C\alpha^{-1} \lambda  \sum_{i \in A(\beta\lambda)} g(T^{i} x) .
\end{eqnarray*}
Then we have $\eqref{lambda1}.$ 

$\ref{Thm:lambda}\Rightarrow\ref{Thm:equa_RH}$
	For all positive integers $k$ and $\lambda>0$, let $\Lambda(\lambda)=\{i: 0 \leq  i\leq  k-1;~\frac{\omega(T^{i}x)}{T^g_{0, k-1} \omega(x)}>\lambda\}.$ Then
	\begin{eqnarray*}
		&&\sum\limits_{i=0}^{k-1}\left(\frac{\omega(T^{i}x)}{T^g_{0, k-1} \omega(x)}\right)^{1+\delta}g(T^{i}x)\\
		&=&\sum\limits_{i=0}^{k-1}\left(\frac{\omega(T^{i}x)}{T^g_{0, k-1} \omega(x)}\right)^{\delta}\frac{\omega(T^{i}x)}{T^g_{0, k-1} \omega(x)}g(T^{i}x)\\
		&=&\delta\int_0^{+\infty}\lambda^{\delta-1}
		\sum\limits_{i \in \Lambda(\lambda)} \frac{\omega(T^{i}x)g(T^{i}x)}{T^g_{0, k-1} \omega(x)}d\lambda\\
		&=&\delta\int_0^{1}\lambda^{\delta-1}
		\sum\limits_{i \in \Lambda(\lambda)}\frac{ \omega(T^{i}x)g(T^{i}x)}{T^g_{0, k-1} \omega(x)}d\lambda+\delta\int_1^{+\infty}\lambda^{\delta-1}
		\sum\limits_{i \in \Lambda(\lambda)}\frac{ \omega(T^{i}x)g(T^{i}x)}{T^g_{0, k-1} \omega(x)}d\lambda.
	\end{eqnarray*}
	It follows that
	\begin{eqnarray*}
		\delta\int_0^{1}\lambda^{\delta-1}
		\frac{\sum\limits_{i \in \Lambda(\lambda)} \omega(T^{i}x)g(T^{i}x)}{T^g_{0, k-1} \omega(x)}d\lambda 
		\leq\frac{\sum\limits_{i=0}^{k-1} \omega(T^{i}x)g(T^{i}x)}{T^g_{0, k-1} \omega(x)}\delta\int_0^{1}\lambda^{\delta-1}
		d\lambda=\sum\limits_{i=0}^{k-1}g(T^{j}x).
	\end{eqnarray*}

For $\lambda>1,$ let $\tilde{\lambda}=\lambda T^g_{0, k-1} \omega(x).$ Then $A(\tilde{\lambda})=\Lambda(\lambda).$
Using \eqref{lambda1}, we obtain the following estimate
	\begin{eqnarray*}
		&~&\delta\int_1^{+\infty}\lambda^{\delta-1}
		\sum\limits_{i \in \Lambda(\lambda)}\frac{ \omega(T^{i}x)g(T^{i}x)}{T^g_{0, k-1} \omega(x)}d\lambda\\
           &=&\delta\int_1^{+\infty}\lambda^{\delta-1}
		\frac{\sum\limits_{i \in A(\tilde{\lambda})} \omega(T^{i}x)g(T^{i}x)}{T^g_{0, k-1} \omega(x)}d\lambda\\
		&\leq&C\delta\int_1^{+\infty}\lambda^{\delta-1} \tilde{\lambda}\frac{\sum_{i \in A(\beta\tilde{\lambda})} g(T^{i} x)}{T^g_{0, k-1} \omega(x)}d\lambda \\
          &=&C\delta\int_1^{+\infty}\lambda^{\delta}\sum_{i \in A(\beta\tilde{\lambda})} g(T^{i} x)
		d\lambda \\
		&=&\frac{C\delta}{\beta^{1+\delta}}\int_\beta^{+\infty}\lambda^{\delta}\sum_{i \in A(\tilde{\lambda})} g(T^{i} x)
		d\lambda\\
          &\leq&\frac{C\delta}{\beta^{1+\delta}}\int_0^{+\infty}\lambda^{\delta}\sum_{i \in \Lambda(\lambda)} g(T^{i} x)
		d\lambda\\
	    &=&\frac{C\delta}{(1+\delta)\beta^{1+\delta}}\sum\limits_{j=0}^{k-1}\left(\frac{\omega(T^{j}x)}{T^g_{0, k-1} \omega(x)}\right)^{1+\delta}g(T^{j} x).
	\end{eqnarray*}
	Because of $\lim\limits_{\delta\rightarrow0}\frac{C\delta}{(1+\delta)\beta^{1+\delta}}=0,$
	we can choose $\delta\in(0,~1)$ such that $\frac{C\delta}{(1+\delta)\beta^{1+\delta}}<\frac{1}{2}.$ Then we have
	$$\sum\limits_{j=0}^{k-1}\Big(\frac{\omega(T^{j}x)}{T^g_{0, k-1} \omega(x)}\Big)^{1+\delta}g(T^{j}x)\leq2\sum\limits_{j=0}^{k-1}g(T^{j}x).$$
	It follows that $$\left[\left(\sum\limits_{i=0}^{k-1}g(T^{i}x)\right)^{-1}\left(\sum\limits_{j=0}^{k-1}\omega^{1+\delta}(T^{j}x)g(T^{j}x)\right)\right]^{\frac{1}{1+\delta}}\leq2^{\frac{1}{1+\delta}}T^g_{0, k-1} \omega(x),$$
which  is exactly \eqref{RH}	 with $q=1+\delta$ and $C=2^{\frac{1}{1+\delta}}.$

$\ref{Thm:equa_RH}\Rightarrow\ref{Thm:d}$ Let $\omega\in RH_q$ with $q>1.$ Denoting $\varepsilon_1=q-1,$ we apply H\"{o}lder's inequality
	\begin{eqnarray*}
&~&\sum\limits_{i\in A}\omega(T^{i}x)g(T^{i}x)\\
		&\leq& \left(\sum\limits_{i\in A}\omega^{1+\varepsilon_1}(T^{i}x)g(T^{i}x)\right)^{\frac{1}{1+\varepsilon_1}}\left(\sum_{i\in A}g(T^{i}x)\right)^{\frac{\varepsilon_1}{1+\varepsilon_1}}\\		
		&\leq& \left[\left(\sum\limits_{i=0}^{k-1}g(T^{i}x)\right)^{-1}\left(\sum\limits_{i=0}^{k-1} \omega^{1+\varepsilon_1}(T^{i}x)g(T^ix)\right)\right]^{\frac{1}{1+\varepsilon_1}}\times\\
&&\times\left(\sum\limits_{i=0}^{k-1}g(T^{i}x)\right)^{\frac{1}{1+\varepsilon_1}}
\left(\sum_{i\in A}g(T^{i}x)\right)^{\frac{\varepsilon_1}{1+\varepsilon_1}}\\
&\leq&C\left(\sum\limits_{i=0}^{k-1}g(T^{i}x)\right)^{-1}\left(\sum\limits_{i=0}^{k-1} \omega(T^{i}x)g(T^ix)\right)\times\\
&&\times\left(\sum\limits_{i=0}^{k-1}g(T^{i}x)\right)^{\frac{1}{1+\varepsilon_1}}
\left(\sum_{i\in A}g(T^{i}x)\right)^{\frac{\varepsilon_1}{1+\varepsilon_1}}\\
&=&C\left(\sum\limits_{i=0}^{k-1} \omega(T^{i}x)g(T^ix)\right)\left[\left(\sum\limits_{i=0}^{k-1}g(T^{i}x)\right)^{-1}
\left(\sum_{i\in A}g(T^{i}x)\right)\right]^{\frac{\varepsilon_1}{1+\varepsilon_1}},
	\end{eqnarray*}	
where we have used the reverse H\"{o}lder's inequality. Thus we have
\begin{eqnarray*}
&&\sum\limits_{i\in A}\omega(T^{i}x)g(T^{i}x)\\
&\leq&C\left(\sum\limits_{i=0}^{k-1} \omega(T^{i}x)g(T^ix)\right)\left[\left(\sum\limits_{i=0}^{k-1}g(T^{i}x)\right)^{-1}
\left(\sum_{i\in A}g(T^{i}x)\right)\right]^{\frac{\varepsilon_1}{1+\varepsilon_1}},
\end{eqnarray*}
which implies $\eqref{d1}$ with  $\varepsilon=\frac{\varepsilon_1}{1+\varepsilon_1}.$

$\ref{Thm:d}\Rightarrow\ref{Thm:e}$
	Pick an $0<\alpha^{''}<1$ such that $\beta^{''}=C(\alpha^{''})^{\varepsilon}<1$.
	For $A\subseteq \{0,1,2,\dots,k-1\}$, denote $A^{c}=\{0,1,2,\dots,k-1\}\backslash A$.
	It follows from $\ref{Thm:d}$ that
	\begin{equation}
		\sum\limits_{i \in A^c} g(T^{i} x)\leq\alpha^{''}\sum\limits_{i=0}^{k-1} g(T^{i} x) \Rightarrow
		\sum\limits_{i \in A^c} \omega(T^{i} x)g(T^{i} x)\leq\beta^{''}\sum\limits_{i=0}^{k-1} \omega(T^{i}x)g(T^{i}x),
	\end{equation} which can be equivalently written as
	\begin{eqnarray*}
&&\sum\limits_{i \in A^c} g(T^{i} x)>(1-\alpha^{''})\sum\limits_{i=0}^{k-1} g(T^{i} x) \Rightarrow\\
&\Rightarrow&
		\sum\limits_{i \in A^c} \omega(T^{i} x)g(T^{i} x)>(1-\beta^{''})\sum\limits_{i=0}^{k-1} \omega(T^{i}x)g(T^{i}x).
	\end{eqnarray*}
	In other words, for subsets A of $\{0,1,2,\dots,k-1\}$ we have
	\begin{eqnarray*}
		&&\sum\limits_{i \in A} \omega(T^{i} x)g(T^{i} x)\leq(1-\beta^{''})\sum\limits_{i=0}^{k-1} \omega(T^{i}x)g(T^{i}x)\Rightarrow\\
&\Rightarrow&\sum\limits_{i \in A} g(T^{i} x)\leq(1-\alpha^{''})\sum\limits_{i=0}^{k-1} g(T^{i} x).
	\end{eqnarray*}
	which is the statement in $\eqref{e1}$ if we set $\alpha^{'}=1-\beta^{''}$ and $\beta^{'}=1-\alpha^{''}.$

$\ref{Thm:e}\Rightarrow\ref{Thm:b}$ Let $E$ be a subsets of $\{1,2,\cdots,k-1\}$ such that  $$\sum\limits_{i\in E} \omega(T^{i}x)g(T^{i}x)> \beta \sum\limits_{i=0}^{k-1}\omega(T^{i}x)g(T^{i}x), $$ where  $\beta$  will be chosen later. Set $ S= \{1,2,\cdots,k-1\}\backslash E $. Then $$\sum\limits_{i\in S} \omega(T^{i}x)g(T^{i}x)\leq(1-\beta) \sum\limits_{i=0}^{k-1} \omega(T^{i}x)g(T^{i}x).$$ Writing
\begin{equation}
		S_{1}=\left\{i \in S: \omega(T^{i}x)>\alpha'T^g_{0, k-1} \omega(x)\right\},  S_{2}=S \backslash S_{1} ,	
\end{equation}
we will estimate $\sum\limits_{i \in S_1} g(T^{i} x)$ and $\sum\limits_{i \in S_2} g(T^{i} x).$
For $S_1,$ it is clear that 
\begin{equation}
		\sum\limits_{i\in S_1}g(T^ix)  < \frac{1}{\alpha'T^g_{0, k-1} \omega(x)} \sum\limits_{i\in S}\omega(T^{i}x)g(T^ix) \leq \frac{1-\beta}{\alpha'} \sum\limits_{i=0}^{k-1} g(T^{i} x) .
\end{equation}	
For $S_2,$ we obtain that
\begin{eqnarray*} \sum\limits_{i \in S_2} \omega(T^{i} x)g(T^{i} x)&\leq& \alpha'T^g_{0, k-1} \omega(x)\sum\limits_{i \in S_2}g(T^{i} x)\\
&\leq& \alpha'T^g_{0, k-1} \omega(x)\sum\limits_{i=0}^{k-1}g(T^{i} x)\\
&=&\alpha' \sum\limits_{i=0}^{k-1} \omega(T^{i} x)g(T^{i} x). \end{eqnarray*}
Furthermore, $ \sum\limits_{i\in S_{2}} g(T^{i}x)\leq \beta' \sum\limits_{i=0}^{k-1} g(T^{i} x) $ by the assumption. 
Because of $\lim\limits_{\beta\rightarrow 1-}\beta'+\frac{1-\beta}{\alpha'}=\beta'<1,$
we choose $\alpha , \beta \in (0,1) $ such that  $\beta'+\frac{1-\beta}{\alpha'}\leq1-\alpha.$ Thus $$\sum\limits_{i\in S}g(T^ix) \leq(1-\alpha) \sum\limits_{i=0}^{k-1} g(T^{i} x) ,$$ which implies $\sum\limits_{i\in E}g(T^ix)>\alpha \sum\limits_{i=0}^{k-1} g(T^{i} x) .$

\ref{Thm:b} $\Rightarrow$ \ref{Thme:con} 
Let $0<\gamma \leq1-\beta.$ Setting 
$$E=\{j:0\leq j\leq k-1;\omega(T^{j}x)\leq\gamma T^g_{0, k-1}\omega(x)\},$$ we have
	\begin{eqnarray*}
\sum\limits_{j \in E} \omega(T^{j}x)g(T^{j}x)&\leq&\gamma T^g_{0, k-1}\omega(x)\sum\limits_{i\in E}g(T^ix)\\
&\leq&\gamma T^g_{0, k-1}\omega(x)\sum\limits_{i=0}^{k-1}g(T^ix)\\
&=&\gamma\sum\limits_{i=0}^{k-1} \omega(T^{i}x)g(T^{i}x)\\
&\leq&(1-\beta)\sum\limits_{i=0}^{k-1} \omega(T^{i}x)g(T^{i}x).
	\end{eqnarray*}
For $E,$ denote $E^{c}=\{0,1,2,\dots,k-1\}\backslash E.$
It follows that $\frac{\sum\limits_{j \in E^c} \omega(T^{j}x)g(T^{j}x)}{\sum\limits_{i=0}^{k-1} \omega(T^{i}x)g(T^{i}x)}>\beta.$ In view of \ref{Thm:b}, we obtain that $$\frac{\sum\limits_{j \in E^c} g(T^{j}x)}{\sum\limits_{i=0}^{k-1}g(T^{i}x)}>\alpha,$$
which implies $\frac{\sum\limits_{j \in E} g(T^{j}x)}{\sum\limits_{i=0}^{k-1}g(T^{i}x)}\leq1-\alpha.$
Thus \ref{Thme:con} is valid with $1-\alpha=\delta.$

	$\ref{Thm:equa_RH} \Rightarrow\ref{thm:log}$
	Let $E_l=\{j: 0 \leq j\leq k-1;2^l<\frac{\omega(T^{j}x)}{T^g_{0, k-1} \omega(x)}\leq2^{l+1}\}$ for $l\in \mathbb{N}.$ In view of \ref{Thm:equa_RH}, we have
	\begin{eqnarray*}
		2^{lp}\frac{\sum\limits_{j \in E_l}g(T^jx)}{\sum\limits_{i=0}^{k-1}g(T^{i}x)}
		&\leq&\frac{1}{\sum\limits_{i=0}^{k-1}g(T^{i}x)}\sum\limits_{j\in E_l}\left(\frac{\omega(T^{j}x)}{T^g_{0, k-1} \omega(x)}\right)^{p}g(T^{j}x)\\
&\leq&\frac{1}{\sum\limits_{i=0}^{k-1}g(T^{i}x)}\sum\limits_{j=0}^{k-1}\left(\frac{\omega(T^{j}x)}{T^g_{0, k-1} \omega(x)}\right)^{p}g(T^{j}x)\\&\leq& C^p,
	\end{eqnarray*}
	which implies $\frac{\sum\limits_{j \in E_l}g(T^jx)}{\sum\limits_{i=0}^{k-1}g(T^{i}x)}\leq C^p2^{-lp}.$
	It follows that
	\begin{eqnarray*}
		&~&\frac{1}{\sum\limits_{i=0}^{k-1}g(T^{i}x)}\sum\limits_{j=0}^{k-1}\left( \frac{\omega(T^{j}x) }{T^g_{0, k-1} \omega(x) }\log^+  \frac{\omega(T^{j}x)}{T^g_{0, k-1} \omega(x)}\right)g(T^{j}x)\\
		&=&\frac{1}{\sum\limits_{i=0}^{k-1}g(T^{i}x)}\sum\limits_{l=0}^{+\infty}\sum\limits_{j\in E_l } \left(\frac{\omega(T^{j}x) }{T^g_{0, k-1} \omega(x) }\log^+  \frac{\omega(T^{j}x)}{T^g_{0, k-1} \omega(x)}\right)g(T^{j}x)\\
		&\leq&\sum\limits_{l=0}^{+\infty}2^{l+1} (l+1)\frac{\sum\limits_{j \in E_l}g(T^jx)}{\sum\limits_{i=0}^{k-1}g(T^{i}x)}\\
		&\leq&C^p\sum\limits_{l=0}^{+\infty}(l+1)2^{l+1}2^{-lp},
	\end{eqnarray*}
where the series $\sum\limits_{l=0}^{+\infty}(l+1)2^{l+1}2^{-lq}$ is convergent. Then we have \eqref{thm:equ_log}.

	$\ref{thm:log}\Rightarrow\ref{Thm:b}$
	Let $	\frac{\sum\limits_{j \in A}g(T^{j}x)}{\sum\limits_{i=0}^{k-1} g(T^{i} x)} \leq\alpha<1.$ Recall that $ab\leq a\log a-a+e^b$ where $a>1$ and $b\geq0.$ Let $B=\{j: 0 \leq j\leq k-1:\frac{\omega(T^{j}x)}{T^g_{0, k-1} \omega(x)}\leq1\}$ and $B^c=\{j: 0 \leq j\leq k-1:\frac{\omega(T^{j}x)}{T^g_{0, k-1} \omega(x)}>1\}.$
	Then
	\begin{eqnarray*}
	  &~&\frac{\sum\limits_{j \in A} \omega(T^{j}x)g(T^{j}x)}{\sum\limits_{i=0}^{k-1} \omega(T^{i} x)g(T^{i}x)}\\
		 &= &\frac{1}{\sum\limits_{i=0}^{k-1} g(T^{i} x)}\sum\limits_{j \in A\cap B} \frac{\omega(T^{j}x)}{T^g_{0, k-1}\omega(x)}g(T^{j} x)+\frac{1}{\sum\limits_{i=0}^{k-1}g(T^{i}x)}\sum\limits_{j \in A\cap B^c }  \frac{\omega(T^{j}x)}{T^g_{0, k-1} \omega(x)} g(T^{j} x)\\
		&\leq&
		\frac{\sum\limits_{j \in A} g(T^jx)}{\sum\limits_{i=0}^{k-1} g(T^{i} x)}+\\
&&+\frac{1}{\sum\limits_{i=0}^{k-1} g(T^{i} x)}
\frac{1}{b+1}\sum\limits_{j \in A\cap B^c} \left(  \frac{\omega(T^{j}x)}{T^g_{0, k-1} \omega(x)}\log \frac{\omega(T^{j}x)}{T^g_{0, k-1} \omega(x)}+e^b \right)g(T^jx)\\
		&\leq&\alpha+\frac{1}{b+1}\frac{1}{\sum\limits_{i=0}^{k-1} g(T^{i} x)}\sum\limits_{i=0}^{k-1}\left(\frac{\omega(T^{i}x)}{T^g_{0, k-1} \omega(x)}\log^+\frac{\omega(T^{i}x)}{T^g_{0, k-1} \omega(x)}\right)g(T^ix)+ \\
&&+\frac{e^b}{b+1}\frac{\sum\limits_{j \in A} g(T^jx)}{\sum\limits_{i=0}^{k-1} g(T^{i} x)} \\
		&\leq&\alpha(1+\frac{e^b}{b+1})+\frac{C}{b+1}.
	\end{eqnarray*}
	Setting $b=2C-1,$ we can pick an $\alpha$ small enough that
	$\alpha(1+\frac{e^b}{b+1})\leq\frac{1}{4}$ because of $\lim\limits_{\alpha\rightarrow0}\alpha(1+\frac{e^b}{b+1})=0.$
	Thus $\frac{\sum\limits_{i \in A} \omega(T^{i}x)}{\sum\limits_{i=0}^{k-1} \omega(T^{i}x)}\leq\frac{3}{4}.$
\end{proof}

Letting $g\equiv1$ in Theorem \ref{Thm_equa_RH}, we have Theorem \ref{sThm_equa_RH}, which is shown in Figure \ref{figure_RH}.

\begin{center}
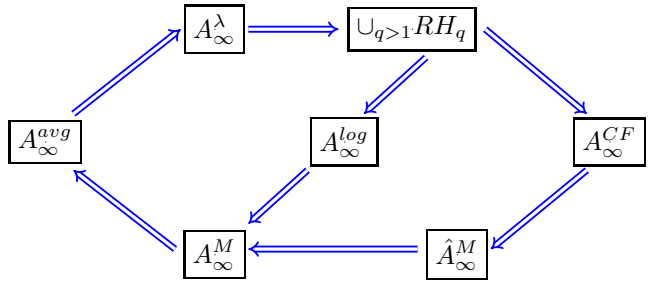
\begin{figure}[H]
	\begin{tikzpicture}[scale=0.75]
	\draw (1,3) node {\boxed{A^{avg}_{\infty}}} -- (1,3);
	\draw (4,5) node {\boxed{A^{\lambda}_{\infty}}} -- (4,5);
	\draw (7.5,5) node {\boxed{\cup_{q>1}RH_{q}}} -- (7.5,5);
    \draw (11,3) node  {\boxed{A^{CF}_{\infty}}}-- (11,3);
    \draw (8.3,1) node  {\boxed{ \hat{A}^{M}_{\infty}}}-- (8.3,1);
	\draw (4,1) node  {\boxed{ A^{M}_{\infty}}}-- (4,1);
	\draw (6.3,3) node {\boxed{A^{log}_{\infty}}} -- (6.3,3);
    \draw[blue, -implies,double equal sign distance, line width=0.25mm] (1.5,3.5) -- (3.4,5);
    \draw[blue, -implies,double equal sign distance, line width=0.25mm] (4.6,5) -- (6.2,5);
    \draw[blue, -implies,double equal sign distance, line width=0.25mm] (8.8,5) -- (10.6,3.5);
    \draw[blue, -implies,double equal sign distance, line width=0.25mm] (10.6,2.5) -- (8.9,1.1);
    \draw[blue, -implies,double equal sign distance, line width=0.25mm] (7.6,1.1) -- (4.6,1.1);
    \draw[blue, -implies,double equal sign distance, line width=0.25mm] (3.3,1.1) -- (1.5,2.5);    
\draw[blue, -implies,double equal sign distance, line width=0.25mm] (7.75,4.5) -- (6.65,3.5);
\draw[blue, -implies,double equal sign distance, line width=0.25mm] (5.65,2.5) -- (4.6,1.55);
	\end{tikzpicture}
\caption{Characterizations of the Reverse H\"{o}lder's Inequality}
	\label{figure_RH}
	\end{figure}
\end{center}

\section{Properties of the union of $A_p$ weights in ergodic theory}
In this section, we prove Theorems \ref{Thm:equa}, \ref{thm:exp-s} and \ref{thm:imp2}.
These are properties of the union of $A_p$ weights on $(X,\mathcal{F},\mu).$ 
\begin{theorem}\label{Thm:equa}Let $w$ be a weight.  We have the sequence of implications
	$\ref{Thm:equa_Ap}\Rightarrow\ref{Thm:equa_A_exp_infty} \Rightarrow\ref{Thm:con}.$
	\begin{enumerate}[\rm (1)]
		
        \item \label{Thm:equa_Ap}There exist
		$C,~p>1$ such that for $a.e.$~$x$ and for all positive integers $k$ we have
		\begin{equation}\label{Ap}
	    \frac{1}{k}\sum_{i=0}^{k-1} \omega(T^{i} x)\left(\frac{1}{k} \sum_{i=0}^{k-1}\omega^{-\frac{1}{p-1}}(T^{i} x)\right)^{p-1} \leq C,
		\end{equation}
		which is denoted by $\omega\in\cup_{p>1}A_p$ or $A_\infty.$ 
				
		\item \label{Thm:equa_A_exp_infty}There exists a positive
		constant $C$ such that for $a.e.$~$x$ and for all positive integers $k$ we have
		\begin{equation}\label{A_exp_infty}
		\frac{1}{k}\sum\limits_{i=0}^{k-1} \omega(T^{i}x)\exp\left({\frac{1}{k}\sum\limits_{i=0}^{k-1}\log \omega^{-1}(T^{i}x)}\right)\leq C,
		\end{equation}
		which is denoted by $\omega\in A^{exp}_{\infty}.$

		\item \label{Thm:con}
		There exist $0<\gamma,\delta<1$ such that for $a.e.$~$x$ and for all positive integers $k$ we have
		\begin{equation}\label{conn}
    	\frac{1}{k}\sharp\Big\{j:0\leq j \leq k-1;\omega(T^{j}x)\leq  \gamma\frac{1}{k}\sum\limits_{i=0}^{k-1} \omega(T^{i}x)\Big\}\leq\delta,
		\end{equation}
		which is denoted by $\omega\in A^{avg}_{\infty}.$
	\end{enumerate}
\end{theorem}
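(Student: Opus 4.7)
The plan is to deduce both implications from Jensen's inequality for the concave function $\log$, together with a parameter-tuning step in the second. For $\ref{Thm:equa_Ap}\Rightarrow\ref{Thm:equa_A_exp_infty}$ I would fix $p>1$ and apply Jensen in the form $\exp\bigl(\tfrac{1}{k}\sum_i \log a_i\bigr)\leq \tfrac{1}{k}\sum_i a_i$ to $a_i=\omega^{-1/(p-1)}(T^ix)$; raising both sides to the power $p-1$ gives
\[
\exp\!\Big(\frac{1}{k}\sum_{i=0}^{k-1}\log\omega^{-1}(T^ix)\Big)\leq\Big(\frac{1}{k}\sum_{i=0}^{k-1}\omega^{-1/(p-1)}(T^ix)\Big)^{\!p-1}.
\]
Multiplying by $\tfrac{1}{k}\sum_{i=0}^{k-1}\omega(T^ix)$ and invoking \eqref{Ap} then yields \eqref{A_exp_infty} with the same constant; no subtlety here.

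For $\ref{Thm:equa_A_exp_infty}\Rightarrow\ref{Thm:con}$ my plan is to convert the exponential bound into the required distributional estimate. I would write $M=\tfrac{1}{k}\sum_{i=0}^{k-1}\omega(T^ix)$ and $G=\exp\bigl(\tfrac{1}{k}\sum_{i=0}^{k-1}\log\omega(T^ix)\bigr)$, so that \eqref{A_exp_infty} reads $G\geq M/C$. For a candidate $\gamma\in(0,1)$ I would set $E=\{j:0\leq j\leq k-1,\ \omega(T^jx)\leq \gamma M\}$ and $\tau=\sharp E/k$, the aim being to produce $\delta<1$ independent of $x,k$ with $\tau\leq\delta$. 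The route is to upper-bound $\log G$ by splitting the average over $E$ and $E^c$: on $E$ the termwise bound $\omega(T^jx)\leq\gamma M$ contributes at most $\tau\log(\gamma M)$, while Jensen applied on $E^c$ together with the trivial estimate $\sum_{j\in E^c}\omega(T^jx)\leq kM$ contributes at most $(1-\tau)\log\bigl(M/(1-\tau)\bigr)$. Exponentiating and combining with $G\geq M/C$ then produces
\[
\frac{1}{C}\leq \gamma^{\tau}(1-\tau)^{-(1-\tau)}=:h(\tau).
\]

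The main obstacle, such as it is, will be the final parameter-tuning: I must choose $\gamma$ so that this forces $\tau$ to stay away from $1$. Since $\lim_{\tau\to 1^-}(1-\tau)^{-(1-\tau)}=1$ (using $0^0=1$), the function $h$ extends continuously to $[0,1]$ with $h(1)=\gamma$. Taking for instance $\gamma=1/(2C)$ therefore makes $h(1)<1/C$, and continuity produces some $\delta\in(0,1)$ with $h(\tau)<1/C$ whenever $\tau>\delta$; thus $\tau\leq\delta$, which is precisely \eqref{conn}. Apart from verifying the boundary limit $(1-\tau)^{-(1-\tau)}\to 1$, the whole argument is pointwise Jensen-type bookkeeping for fixed $x$, with no ergodic-theoretic input needed beyond the definitions.
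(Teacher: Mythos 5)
Your proof is correct. The first implication $\ref{Thm:equa_Ap}\Rightarrow\ref{Thm:equa_A_exp_infty}$ is essentially identical to the paper's: Jensen applied to $\omega^{-1/(p-1)}$, raise to the power $p-1$, multiply by the average of $\omega$. For $\ref{Thm:equa_A_exp_infty}\Rightarrow\ref{Thm:con}$, however, you take a genuinely different route. The paper normalizes by the geometric mean $v_k(x)=\exp\bigl(\tfrac1k\sum_i\log\omega(T^ix)\bigr)$, observes that $\tfrac1k\sum_i\log\tfrac{\omega(T^ix)}{v_k(x)}=0$, and then runs a Chebyshev-type argument on the level set of $\log\bigl(1+\tfrac{v_k(x)}{\omega(T^jx)}\bigr)$, using the zero-mean identity to convert $\sum_i\log\bigl(1+\tfrac{v_k}{\omega}\bigr)$ into $\sum_i\log\bigl(1+\tfrac{\omega}{v_k}\bigr)\leq kC$; this yields the explicit bound $\delta=C/\log\bigl(1+\tfrac{1}{\gamma C}\bigr)$, which visibly tends to $0$ as $\gamma\to0$. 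You instead bound $\log G$ directly by splitting the sum over $E$ and $E^c$, arriving at the transcendental constraint $1/C\leq\gamma^{\tau}(1-\tau)^{-(1-\tau)}$ and extracting $\delta<1$ by continuity at $\tau=1$. Both arguments are elementary Jensen-type bookkeeping for fixed $x$ and $k$; your constraint can even be made quantitative, since $(1-\tau)\log\tfrac{1}{1-\tau}\leq 1/e$ gives $\tau\leq(\log C+1/e)/\log(1/\gamma)$, which, like the paper's bound, shows $\delta$ can be taken arbitrarily small. The only point worth making explicit in a write-up is the degenerate case $E^c=\emptyset$ (i.e.\ $\tau=1$), where the Jensen step on $E^c$ is vacuous and the inequality $1/C\leq\gamma$ follows directly; your choice $\gamma=1/(2C)$ already excludes it.
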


\begin{proof}[Proof of Theorem \ref{Thm:equa}] $\ref{Thm:equa_Ap}\Rightarrow\ref{Thm:equa_A_exp_infty}$ Let $\omega\in A_p$ with $p>1.$
Using Jensen's inequality, we have
$$\exp\left({\frac{1}{k}\sum\limits_{i=0}^{k-1}\log \omega^{-\frac{1}{p-1}}(T^{i}x)}\right)\leq\frac{1}{k}\sum\limits_{i=0}^{k-1} \omega^{-\frac{1}{p-1}}(T^{i}x).$$
Then $$\exp\left({\frac{\frac{1}{p-1}}{k}\sum\limits_{i=0}^{k-1}\log \omega^{-1}(T^{i}x)}\right)
\leq\frac{1}{k}\sum\limits_{i=0}^{k-1} \omega^{-\frac{1}{p-1}}(T^{i}x),$$
which implies
$$\exp\left({\frac{1}{k}\sum\limits_{i=0}^{k-1}\log \omega^{-1}(T^{i}x)}\right)\leq
		\left(\frac{1}{k}\sum\limits_{i=0}^{k-1} \omega^{-\frac{1}{p-1}}(T^{i}x)\right)^{p-1}.$$
It follows that
	\begin{eqnarray*}
&&\frac{1}{k}\sum\limits_{i=0}^{k-1} \omega(T^{i}x)\exp\left({\frac{1}{k}\sum\limits_{i=0}^{k-1}\log \omega^{-1}(T^{i}x)}\right)\\
&\leq&\frac{1}{k}\sum\limits_{i=0}^{k-1} \omega(T^{i}x)\left(\frac{1}{k}\sum\limits_{i=0}^{k-1} \omega^{-\frac{1}{p-1}}(T^{i}x)\right)^{p-1}\leq C.
	\end{eqnarray*}
Thus we have $$\frac{1}{k}\sum\limits_{i=0}^{k-1} \omega(T^{i}x)\exp\left({\frac{1}{k}\sum\limits_{i=0}^{k-1}\log \omega^{-1}(T^{i}x)}\right)\leq C.$$

$\ref{Thm:equa_A_exp_infty}\Rightarrow\ref{Thm:con}$ Let $k$ be fixed and let $v_k(x)=\exp\left(\frac{1}{k}\sum\limits_{i=0}^{k-1}\log \omega(T^{i}x)\right).$ We have that
\begin{eqnarray*}
	1&=&\frac{1}{v_k(x)}\exp\left(\frac{1}{k}\sum\limits_{i=0}^{k-1}\log \omega(T^{i}x)\right)\\
	&=&\exp\left(\frac{1}{k}\big(k\log\frac{1}{v_k(x)}\big)+\frac{1}{k}\sum\limits_{i=0}^{k-1}\log \omega(T^{i}x)\right)\\
	&=&\exp\left(\frac{1}{k}\sum\limits_{i=0}^{k-1}\log \frac{\omega(T^{i}x)}{v_k(x)}\right).
\end{eqnarray*}
It follows that
\begin{equation}\label{equal0}
	\frac{1}{k}\sum\limits_{i=0}^{k-1}\log \frac{\omega(T^{i}x)}{v_k(x)}=0.
\end{equation}
Using \eqref{A_exp_infty}, we obtain that
\begin{equation}\label{3.6}
	\frac{1}{v_k(x)}\frac{1}{k}\sum\limits_{i=0}^{k-1} \omega(T^{i}x)\leq  \frac{C}{v_k(x)} \exp\left(\frac{1}{k}\sum\limits_{i=0}^{k-1}\log \omega(T^{i}x)\right) =C.
\end{equation}
For some $\gamma>0$ to
be chosen later, we observe that
\begin{eqnarray*}
	&&\left \{j:0\leq j \leq k-1;\omega(T^{j}x)\leq  \gamma \frac{1}{k}\sum\limits_{i=0}^{k-1} \omega(T^{i}x)\right\} \\	
	&=&\left \{j:0\leq j \leq k-1;\frac{\omega(T^{j}x)}{v_k(x)}\leq  \frac{\gamma}{v_k(x)}\frac{1}{k}\sum\limits_{i=0}^{k-1} \omega(T^{i}x)\right\}\\		
	&\subseteq& \left\{j:0\leq j \leq k-1;\frac{\omega(T^{j}x)}{v_k(x)}\leq \gamma C\right\}\\	
	&=&\left\{j:0\leq j \leq k-1;\log\left(1+\frac{1}{\gamma C}\right)  \leq  \log\left(1+ \frac{v_k(x)}{\omega(T^{j}x)}\right) \right\}.
\end{eqnarray*}
Thus
\begin{eqnarray*}	
	&& \sharp\left\{j:0\leq j \leq k-1;\omega(T^{j}x)\leq \gamma \frac{1}{k}\sum\limits_{i=0}^{k-1} \omega(T^{i}x)\right\} \\	
	&\leq&\sharp\left\{j:0\leq j \leq k-1; \log(1+\frac{1}{\gamma C})  \leq  \log\left(1+\frac{v_k(x)}{\omega(T^{j}x)}\right) \right\}\\
	&\leq& \frac{1}{\log(1+\frac{1}{\gamma C})}\sum\limits_{i=0}^{k-1}\log\left(1+\frac{v_k(x)}{\omega(T^{i}x)}\right)\\	
	&=& \frac{k}{\log(1+\frac{1}{\gamma C})} \frac{1}{k}\sum\limits_{i=0}^{k-1}\log\left(\left(1+\frac{v_k(x)}{\omega(T^{i}x)}\right)\frac{\omega(T^{i}x)}{v_k(x)}\right)\\	
	&=& \frac{k}{\log(1+\frac{1}{\gamma C})} \frac{1}{k}\sum\limits_{i=0}^{k-1}\log \left(1+ \frac{\omega(T^{i}x)}{v_k(x)}\right),	
\end{eqnarray*}
where we have used \eqref{equal0}.
It follows from \eqref{3.6} that
\begin{eqnarray*}
	&& \frac{k}{\log(1+\frac{1}{\gamma C})} \frac{1}{k}\sum\limits_{i=0}^{k-1}\log \left(1+ \frac{\omega(T^{i}x)}{v_k(x)}\right)\\	       
	&\leq& \frac{k}{\log(1+\frac{1}{\gamma C})} \frac{1}{k}\sum\limits_{i=0}^{k-1}  \frac{\omega(T^{i}x)}{v_k(x)}\\		
	&\leq& \frac{kC}{\log(1+\frac{1}{\gamma C})}.
\end{eqnarray*}
Since
$\lim\limits_{\gamma\rightarrow0}\frac{C}{\log(1+\frac{1}{\gamma C})}=0,$ we have $\eqref{conn}$.

\end{proof}

\begin{lemma}\label{key_lemma_sta}Let $v$ be a positive measurable function. Then
\begin{equation}\label{key_lemma}
	\left(\frac{1}{k}\sum\limits_{i=0}^{k-1} v^{s}(T^{i}x)\right)^{\frac{1}{s}}\downarrow\exp \left(\frac{1}{k}\sum\limits_{i=0}^{k-1} \log v(T^{i}x)\right),~\text{~as~}s\downarrow0^+.
\end{equation}\end{lemma}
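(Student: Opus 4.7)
The plan is to fix $x \in X$ and set $a_i := v(T^i x) > 0$ for $i = 0, 1, \ldots, k-1$, thereby reducing the lemma to the classical fact that for any finite sequence of positive reals, the power mean
$$M(s) := \left(\frac{1}{k}\sum_{i=0}^{k-1} a_i^s\right)^{1/s}$$
is non-decreasing in $s > 0$ and converges to the geometric mean $G := \exp\bigl(\frac{1}{k}\sum_{i=0}^{k-1}\log a_i\bigr)$ as $s \downarrow 0^+$. The ergodic transformation $T$ plays no further role; this is purely a statement about the $k$ positive numbers $a_0, \ldots, a_{k-1}$.

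For the monotonicity step I would fix $0 < s < t$ and apply Jensen's inequality to the convex function $u \mapsto u^{t/s}$ (convex because $t/s > 1$), obtaining
$$\left(\frac{1}{k}\sum_{i=0}^{k-1} a_i^s\right)^{t/s} \leq \frac{1}{k}\sum_{i=0}^{k-1} a_i^t.$$
Raising both sides to the $1/t$ power yields $M(s) \leq M(t)$, so $M$ is non-decreasing on $(0,\infty)$. In particular $M(s)$ decreases monotonically as $s$ decreases to $0^+$, which justifies the $\downarrow$ symbol in the statement; it only remains to identify the limit.

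To identify $\lim_{s\to 0^+} M(s)$, I would pass to logarithms and consider
$$\log M(s) = \frac{1}{s}\log\left(\frac{1}{k}\sum_{i=0}^{k-1} a_i^s\right),$$
which is an indeterminate form $0/0$. The expansion $a_i^s = 1 + s\log a_i + O(s^2)$ as $s \to 0^+$ gives $\frac{1}{k}\sum_i a_i^s = 1 + \frac{s}{k}\sum_i \log a_i + O(s^2)$, and then $\log(1+u) = u + O(u^2)$ yields $\log M(s) = \frac{1}{k}\sum_i \log a_i + O(s)$. Exponentiating produces $M(s) \to G$. Equivalently, L'H\^{o}pital's rule applied to $\log M(s)$ gives the same conclusion directly, since $\frac{d}{ds}\log\bigl(\frac{1}{k}\sum a_i^s\bigr) = \frac{\sum a_i^s \log a_i}{\sum a_i^s} \to \frac{1}{k}\sum \log a_i$ as $s \to 0^+$.

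There is no genuine obstacle here: because $v > 0$ on $X$ and only finitely many terms appear, integrability and convergence issues are absent, and every step is elementary. The only small points of care are to apply Jensen's inequality in the correct direction (which requires $t/s > 1$) and to note that the Taylor error is uniform over the finite collection $a_0, \ldots, a_{k-1}$, so it is harmless in the limit as $s \downarrow 0^+$.
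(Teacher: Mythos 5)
Your proposal is correct, and the monotonicity step coincides with the paper's (the power mean inequality, which the paper phrases as H\"older's inequality and you derive from Jensen applied to $u\mapsto u^{t/s}$; these are the same estimate). Where you diverge is in identifying the limit. The paper avoids any asymptotic expansion: it sandwiches the power mean by proving the lower bound $\exp\bigl(\frac{1}{k}\sum_i\log v(T^ix)\bigr)\le\bigl(\frac{1}{k}\sum_i v^s(T^ix)\bigr)^{1/s}$ for every $s$ via Jensen, and the upper bound $\bigl(\frac{1}{k}\sum_i v^s(T^ix)\bigr)^{1/s}\le\exp\bigl(\frac{1}{k}\sum_i\frac{v^s(T^ix)-1}{s}\bigr)$ via the elementary inequality $x\le\exp(x-1)$, then lets $s\downarrow 0^+$ using the pointwise monotone convergence $\frac{t^s-1}{s}\downarrow\log t$. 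Your route instead computes the limit directly from the Taylor expansion $a_i^s=1+s\log a_i+O(s^2)$ (or L'H\^opital), which is perfectly sound here because only finitely many terms appear and, as you note, the error is uniform over them. The paper's squeeze has the advantage of yielding the two-sided bound for each fixed $s$ (the lower bound $G\le M(s)$ is itself used implicitly when the lemma is applied in Theorem 4.3), whereas your argument is the more standard calculus computation and identifies the limit with slightly less machinery. Both are complete proofs.
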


\begin{proof}[Proof of Lemma \ref{key_lemma_sta}] 
		H\"{o}lder's inequality gives
		$$\left(\frac{1}{k}\sum\limits_{i=0}^{k-1} v^s(T^{i}x)\right)^{\frac{1}{s}}\leq\left(\frac{1}{k}\sum\limits_{i=0}^{k-1}v^t(T^{i}x)\right)^{\frac{1}{t}}$$ with $0<s<t<s_0.$		
		By Jensen's inequality, we have
		$$
		\exp\frac{1}{k}\sum\limits_{i=0}^{k-1}\log v^s(T^{i}x)\leq\frac{1}{k}\sum\limits_{i=0}^{k-1} v^s(T^{i}x),
		$$
		which implies \begin{equation}\label{eq_left}\exp \left(\frac{1}{k}\sum\limits_{i=0}^{k-1} \log v(T^{i}x)\right)\leq\left(\frac{1}{k}\sum\limits_{i=0}^{k-1} v^s(T^{i}x)\right)^{\frac{1}{s}}.
		\end{equation}
		Because of $x\leq \exp(x-1)$ for $x>0,$ then
		$$\frac{1}{k}\sum\limits_{i=0}^{k-1} v^s(T^{i}x)\leq\exp \left(\frac{1}{k}\sum\limits_{i=0}^{k-1} v^s(T^{i}x)-1\right).$$
		It follows that
		\begin{eqnarray*}\label{eq_right}
			&&\left(\frac{1}{k}\sum\limits_{i=0}^{k-1} v^s(T^{i}x)\right)^{\frac{1}{s}}\\
                &\leq&\exp \Big(\frac{\frac{1}{k}\sum\limits_{i=0}^{k-1} v^s(T^{i}x)-1}{s}\Big)\\
			&=&\exp \frac{1}{k}\sum\limits_{i=0}^{k-1}\frac{v^s(T^{i}x)-1}{s}
                \downarrow\exp \left(\frac{1}{k}\sum\limits_{i=0}^{k-1} \log v(T^{i}x)\right),~~\text{~as~}s\downarrow0^+.
		\end{eqnarray*}
				This completes the proof of \eqref{key_lemma}
\end{proof}

Using a kind of reverse H\"{o}lder's condition which appeared in Str\"{o}mberg and Wheeden \cite{MR766221}, we give a characterization of $\omega\in A^{exp}_{\infty},$ which is Theorem \ref{thm:exp-s}.
\begin{theorem}\label{thm:exp-s}The following statements are equivalent.
	\begin{enumerate}[\rm (1)]
		\item \label{thm:exp-s2} $\omega\in A^{exp}_{\infty}.$
		\item \label{thm:exp-s1}There exists $C>1$ such that for every $s\in(0,1),$   $a.e.$~$x$ and for all positive integers $k$ we have
		\begin{equation}\label{thm:eq-exp-s}
			\frac{1}{k}\sum\limits_{i=0}^{k-1} \omega(T^{i}x)\leq C\left(\frac{1}{k}\sum\limits_{i=0}^{k-1} \omega^{s}(T^{i}x)\right)^{\frac{1}{s}},\end{equation}
		which is denoted by $\omega\in A^{SW}_{\infty}.$
	\end{enumerate}
\end{theorem}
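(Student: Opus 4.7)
The plan is to derive both implications directly from Lemma~\ref{key_lemma_sta}, which contains all the analytic content (monotonicity and the limit as $s\downarrow0^+$ of the $L^s$-mean to the geometric mean). The definition $A_\infty^{exp}$ is equivalent to
\begin{equation*}
	\frac{1}{k}\sum\limits_{i=0}^{k-1}\omega(T^{i}x)\leq C\exp\left(\frac{1}{k}\sum\limits_{i=0}^{k-1}\log\omega(T^{i}x)\right),
\end{equation*}
so the whole theorem is the statement that this exponential-mean bound is equivalent to the family of $L^s$-mean bounds \eqref{thm:eq-exp-s} for $s\in(0,1)$, with a constant independent of $s$.

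For $\ref{thm:exp-s2}\Rightarrow\ref{thm:exp-s1}$, I would invoke the one-sided Jensen bound \eqref{eq_left} already proved inside Lemma~\ref{key_lemma_sta}, namely
\begin{equation*}
	\exp\left(\frac{1}{k}\sum\limits_{i=0}^{k-1}\log\omega(T^{i}x)\right)\leq\left(\frac{1}{k}\sum\limits_{i=0}^{k-1}\omega^{s}(T^{i}x)\right)^{\frac{1}{s}},
\end{equation*}
valid for every $s>0$. Concatenating this with the rewritten $A_\infty^{exp}$ inequality yields \eqref{thm:eq-exp-s} with the same constant $C$, uniformly in $s\in(0,1)$.

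For $\ref{thm:exp-s1}\Rightarrow\ref{thm:exp-s2}$, I would fix $x$ and $k$ and let $s\downarrow0^+$ in \eqref{thm:eq-exp-s}. The left-hand side does not depend on $s$; by the convergence statement in Lemma~\ref{key_lemma_sta}, the right-hand side decreases to $C\exp\left(\frac{1}{k}\sum_{i=0}^{k-1}\log\omega(T^{i}x)\right)$. Passing to the limit gives exactly the $A_\infty^{exp}$ bound with the same constant $C$.

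The only delicate point is ensuring that the constant in \ref{thm:exp-s1} is genuinely uniform in $s$, since the statement explicitly requires a single $C$ that works for every $s\in(0,1)$; this is what allows the limit $s\downarrow 0^+$ to be taken in the reverse direction without constants blowing up. There is no real obstacle beyond this bookkeeping, because both implications reduce to a single application of Lemma~\ref{key_lemma_sta}, used once for the monotone lower bound and once for the limit.
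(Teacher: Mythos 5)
Your proof is correct and follows essentially the same route as the paper: the paper's own argument consists of citing Lemma~\ref{key_lemma_sta} and declaring the equivalence, and your write-up simply makes explicit the two halves of that lemma (the Jensen lower bound \eqref{eq_left} for one direction, the monotone limit as $s\downarrow 0^+$ for the other) together with the observation that the constant is uniform in $s$. No gap; your version is just a more detailed rendering of the paper's one-line proof.
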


\begin{proof}[Proof of Theorem \ref{thm:exp-s}]
	In view of Lemma \ref{key_lemma_sta}, we have that
	\begin{equation*}
	\left(\frac{1}{k}\sum\limits_{i=0}^{k-1} \omega^{s}(T^{i}x)\right)^{\frac{1}{s}}\downarrow\exp \left(\frac{1}{k}\sum\limits_{i=0}^{k-1} \log\omega(T^{i}x)\right),~\text{~as~}s\downarrow0^+,
	\end{equation*}
	which establishes the
	equivalence between \ref{thm:exp-s2} and \ref{thm:exp-s1}.	
\end{proof}

For $A_{\infty}^{med}$ in \cite{MR529683}, we replace the median $m(\omega;Q)$ by the median function $m(\omega,k)$ (see Definition \ref{media_f}), which is the key observation in Theorem \ref{thm:imp2}.

\begin{definition}\label{media_f}The median function of $\omega$ relative to $k$ is defined as a measurable function $m(\omega;k)$ such that $\sharp\{i:0\leq i\leq k-1;~\omega(T^{i}x)< m(\omega, k)\}\leq k/ 2$
and $\sharp\{i:0\leq i\leq k-1;~\omega(T^{i}x)> m(\omega, k)\}\leq  k/ 2.$

\end{definition}

\begin{theorem}\label{thm:imp2}Let $\omega$ be a weight. We have the sequence of implications
	$\eqref{thm:imp2_Log} \Rightarrow\eqref{thm:imp2_Mid}\Rightarrow\eqref{thm:imp2_Dou}$
	for the following statements.
	\begin{enumerate}
	
		\item \label{thm:imp2_Log}$\omega\in A^{exp}_{\infty}.$
		
		\item \label{thm:imp2_Mid} There exists $C>1$ such that for all $n\in \mathbb{N}$ we have
		\begin{equation}\label{thm:imp2_EMid}
			T_{0, k-1} \omega(x)\leq Cm(\omega,k),
		\end{equation}
		which is denoted by $\omega\in A_{\infty}^{med}.$
		
		\item \label{thm:imp2_Dou}$\omega \in A_{\infty}^{M}.$
	\end{enumerate}
\end{theorem}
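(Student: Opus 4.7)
The plan is to establish $(1) \Rightarrow (2)$ by a direct splitting argument at the median combined with Jensen's inequality, and then to deduce $(2) \Rightarrow (3)$ by first showing that $A_\infty^{med}$ implies $A_\infty^{avg}$ and invoking the equivalence of $A_\infty^{avg}$ with $A_\infty^M$ already established in Theorem \ref{sThm_equa_RH}.

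For $(1) \Rightarrow (2)$, fix $x$ and $k$, write $m = m(\omega,k)$, and set $B = \{i : 0 \leq i \leq k-1,\ \omega(T^i x) \leq m\}$. By the median definition $a := \sharp B / k \geq 1/2$, so $b := 1-a = \sharp B^c / k \in [0, 1/2]$. Splitting the geometric mean at $m$ and bounding the two pieces separately gives
\[
\frac{1}{k}\sum_{i=0}^{k-1}\log\omega(T^i x) \leq a\log m + b\log\bigl(T_{0,k-1}\omega(x)/b\bigr),
\]
where on $B$ I use the pointwise bound $\omega \leq m$ and on $B^c$ I apply Jensen's inequality to the uniform average over the $\sharp B^c$ points together with $\frac{1}{\sharp B^c}\sum_{B^c}\omega(T^i x) \leq kT_{0,k-1}\omega(x)/\sharp B^c$. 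Exponentiating and substituting into the $A_\infty^{exp}$ hypothesis $T_{0,k-1}\omega \leq C\exp(\overline{\log\omega})$ yields
\[
T_{0,k-1}\omega(x) \leq C\,m^{a}\bigl(T_{0,k-1}\omega(x)/b\bigr)^{b},
\]
which rearranges (using $1-b=a$) to $T_{0,k-1}\omega(x) \leq (Cb^{-b})^{1/a}\,m$. Since $b^{-b}$ is bounded on $[0,1/2]$ and $1/a \leq 2$, this gives $T_{0,k-1}\omega(x) \leq C'\,m(\omega,k)$ with $C'$ depending only on $C$, which is $A_\infty^{med}$.

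For $(2) \Rightarrow (3)$, I would show that $A_\infty^{med}$ implies $A_\infty^{avg}$ and then cite Theorem \ref{sThm_equa_RH}, which identifies $A_\infty^{avg}$ with $A_\infty^{M}$. Choose any $\gamma \in (0,1/C)$. Then $\gamma T_{0,k-1}\omega(x) \leq \gamma C\,m(\omega,k) < m(\omega,k)$, so
\[
\bigl\{i : \omega(T^i x) \leq \gamma T_{0,k-1}\omega(x)\bigr\} \subseteq \bigl\{i : \omega(T^i x) < m(\omega,k)\bigr\},
\]
and the latter has cardinality at most $k/2$ by the median definition. Hence condition \eqref{conn} holds with $\delta = 1/2$, so $\omega \in A^{avg}_{\infty}$, and Theorem \ref{sThm_equa_RH} delivers $\omega \in A_{\infty}^{M}$.

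The main obstacle is in $(1) \Rightarrow (2)$: the implicit inequality $T_{0,k-1}\omega \leq Cm^a(T_{0,k-1}\omega/b)^b$ only yields a genuine bound on $T_{0,k-1}\omega/m$ when the exponent $b$ is bounded strictly below $1$, which is exactly what the median provides via $b \leq 1/2$. Splitting precisely at the median (rather than at some other threshold) is what couples Jensen's inequality to the median bound and makes the inequality solvable for $T_{0,k-1}\omega$.
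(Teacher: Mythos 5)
Your proof is correct, but both implications are argued differently from the paper. For $(1)\Rightarrow(2)$ the paper does not touch the logarithmic average directly: it first invokes Theorem \ref{thm:exp-s} to replace $A^{exp}_\infty$ by the Str\"omberg--Wheeden condition $A^{SW}_\infty$, then applies H\"older's inequality with a small exponent $s$ on $E=\{j:\omega(T^jx)>m(\omega,k)\}$ to show that $E$ carries at most $3/4$ of the total $s$-mass once $2^{s-1}C^s<3/4$, and concludes from $\omega\le m(\omega,k)$ on $E^c$ that $T_{0,k-1}\omega(x)\le 4^{1/s}C\,m(\omega,k)$. Your route --- splitting $\frac1k\sum\log\omega(T^ix)$ at the median, bounding the piece on $B$ pointwise and the piece on $B^c$ by Jensen, and solving the resulting inequality $T_{0,k-1}\omega\le C m^a(T_{0,k-1}\omega/b)^b$ using $a\ge 1/2$ --- is more self-contained (it needs neither Theorem \ref{thm:exp-s} nor Lemma \ref{key_lemma_sta}) and yields an explicit constant such as $C^2e^{2/e}$; you should just dispose of the degenerate case $b=0$ separately, where the claim is immediate. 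For $(2)\Rightarrow(3)$ the paper gives a short direct counting argument: if $\sharp E/k<1/4$ then $E^c\cap\{\omega\ge m(\omega,k)\}$ has at least $k/4$ elements, whence $\sum_{j\in E^c}\omega(T^jx)\ge \frac{1}{4C}\sum_{i}\omega(T^ix)$ and $A^M_\infty$ follows with $\beta=1-\frac{1}{4C}$. Your reduction $A^{med}_\infty\Rightarrow A^{avg}_\infty$ (choosing $\gamma<1/C$ so that the sublevel set sits inside $\{\omega<m(\omega,k)\}$, of cardinality at most $k/2$) is a clean one-liner and is not circular, but it then delegates the passage to $A^M_\infty$ to the full equivalence chain of Theorem \ref{sThm_equa_RH}, which is considerably heavier machinery than the paper's four-line argument; on the other hand it records the useful intermediate fact $A^{med}_\infty\Rightarrow A^{avg}_\infty$, which the paper's Figure \ref{figure_Ap} does not display.
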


\begin{proof}[Proof of Theorem \ref{thm:imp2}] 
	
	$\eqref{thm:imp2_Log} \Rightarrow\eqref{thm:imp2_Mid}$
	Let $E=\{j:0\leq j\leq k-1; \omega(T^{j}x)>m(\omega,k)\}.$ Using H\"{o}lder's inequality,
	we have
	\begin{eqnarray*}
		\frac{1}{k}\sum\limits_{j\in E} \omega ^s(T^{j}x) 
		&\leq&\left(\frac{1}{k}\sum\limits_{i=0}^{k-1} \omega (T^{i}x)\right)^s(\frac{\sharp E}{k})^{1-s}\\
		&\leq&2^{s-1}C^s\frac{1}{k}\sum\limits_{i=0}^{k-1} \omega ^s(T^{i}x) ,
	\end{eqnarray*}
	where we have used Theorem \ref{thm:exp-s}.
	It follows that $$\frac{1}{k}\sum\limits_{j\in E} \omega ^s(T^{j}x)
	\leq\frac{3}{4}\frac{1}{k}\sum\limits_{i=0}^{k-1} \omega ^s(T^{i}x) $$ provided $2^{s-1}C^s<\frac{3}{4}.$
	Then $\frac{1}{k}\sum\limits_{j\in E^c} \omega ^s(T^{j}x) 
	\geq\frac{1}{4}\frac{1}{k}\sum\limits_{i=0}^{k-1} \omega ^s(T^{i}x) .$ Thus
	\begin{eqnarray*}
		\frac{1}{4}\left(\frac{1}{k}\sum\limits_{i=0}^{k-1} \omega (T^{i}x)\right)^s &\leq&\frac{1}{4}C^s\frac{1}{k}\sum\limits_{i=0}^{k-1} \omega ^s(T^{i}x) \\ &\leq&C^s\frac{1}{k}\sum\limits_{j\in E^c} \omega ^s(T^{j}x)\\
		&\leq&C^s\frac{1}{k}\sum\limits_{j\in E^c} m(\omega, k) ^s\\
		&\leq&C^s\frac{1}{k}\sum\limits_{i=0}^{k-1} m(\omega, k) ^s \\
		&=&C^s m(\omega, k) ^s,
	\end{eqnarray*}
	which implies $\frac{1}{k}\sum\limits_{i=0}^{k-1} \omega(T^{i}x) \leq4^{\frac{1}{s}}C m(\omega, k).$
	
	$\eqref{thm:imp2_Mid}\Rightarrow\eqref{thm:imp2_Dou}$ 
Let $\alpha<\frac{1}{4}$ and $\frac{\sharp E}{k}<\frac{1}{4}.$ We claim that $$\frac{\sharp(E^c\cap\{i:0\leq i\leq k-1;\omega(T^{i}x)\geq  m(\omega, k)\} )}{k} \geq\frac{1}{4}.$$
	Indeed, we have
	\begin{eqnarray*}&&\frac{\sharp ({E\cap\{i:0\leq i\leq k-1;\omega(T^{i}x)< m(\omega, k)\}}}{k} \\
		&\leq&\frac{\sharp E}{k}+\frac{\sharp (\{i:0\leq i\leq k-1;\omega(T^{i}x)< m(\omega, k)\})}{k} \\
		&<&\frac{1}{4}+\frac{1}{2}=\frac{3}{4}.\end{eqnarray*}
	This proves that $$\frac{\sharp(E^c\cap\{i:0\leq i\leq k-1;\omega(T^{i}x)\geq  m(\omega, k)\} )}{k} \geq\frac{1}{4}.$$
	It follows from $\eqref{thm:imp2_Mid}$ that
	\begin{eqnarray*}
		\frac{1}{k}\sum\limits_{i=0}^{k-1} \omega(T^{i}x)
		&\leq& Cm(\omega, k)\\
		&\leq& 4C m(\omega, k)\frac{\sharp(E^c\cap\{i:0\leq i\leq k-1;\omega(T^{i}x)\geq  m(\omega, k)\} )}{k} \\
		&\leq& 4C \frac{1}{k}\sum\limits_{j\in E^c}\omega(T^{j}x).
	\end{eqnarray*}
Then we have $\sum\limits_{i=0}^{k-1} \omega(T^{i}x)\leq 4C \sum\limits_{j\in E^c}\omega(T^{j}x) $ which implies $$\frac{1}{4C}\leq    	\frac{	\sum\limits_{j \in E^c} \omega(T^{j} x)}{\sum\limits_{i=0}^{k-1} \omega(T^{i} x)}.$$ Thus $\frac{\sum\limits_{i \in E} \omega(T^{i} x)}{\sum\limits_{i=0}^{k-1} \omega(T^{i} x)}  <\beta$ with $\beta=1-\frac{1}{4C}.$
\end{proof}

As we did in Section \ref{Sec-RH}, we can develop and prove theorems involving $g.$ Because these results will not be used in the rest of our paper, we omit them.
At the end of this section, our results are collected in Figure \ref{figure_Ap}. 
Combining with Section \ref{Sec-RH}, we obtain that $\cup_{p>1}A_p\Rightarrow \cup_{q>1}RH_{q}.$ The converse will be studied in Section \ref{Sec_fur}.
\begin{center}
\begin{figure}[H]
	\begin{tikzpicture}[scale=0.75]	
     \draw (1,5) node {\boxed{\cup_{p>1}A_p}} -- (1,5);
	\draw (4,3) node {\boxed{A^{exp}_{\infty}}} -- (4,3);
	\draw (4,1) node {\boxed{A^{SW}_{\infty}}} -- (4,1);
	\draw (7,1) node  {\boxed{ A^{med}_{\infty}}}-- (7,1);
	\draw (7,3) node  {\boxed{ A^{avg}_{\infty}}}-- (7,3);
	\draw (10,1) node  {\boxed{ A^{M}_{\infty}}}-- (10,1);
    \draw[-implies,double equal sign distance, line width=0.25mm] (2.1,4.5)--(3.3,3.5) ;  
    \draw[-implies,double equal sign distance, line width=0.25mm] (4.7,3)--(6.2,3) ;          
    \draw[implies-implies,double equal sign distance, line width=0.25mm] (4,1.5)--(4,2.5) ;  
    \draw[-implies,double equal sign distance, line width=0.25mm] (4.7,2.5)--(6.2,1.5) ; 
    \draw[-implies,double equal sign distance, line width=0.25mm] (7.8,1)--(9.3,1) ;    
	\end{tikzpicture}
\caption{Properties of the Union of $A_p$ Weights}
	\label{figure_Ap}
	\end{figure}
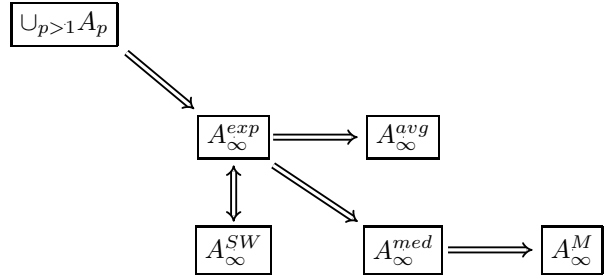
\end{center}

\section{Further results with doubling condition}\label{Sec_fur}
In this section we show $\cup_{q>1}RH_{q}\Rightarrow \cup_{p>1}A_p$ under the doubling condition on the weight $\omega.$ This is theorem \ref{Thm_RH_AP}. It seems reasonable to assume that $\omega$ satisfies the doubling condition, because $\omega\in A_p$ implies the doubling condition, which will be shown in the following Lemma \ref{lem-dou}.

\begin{lemma}\label{lem-dou} 
let $\omega$ be a weight and $p>1$. If $\omega\in A_p,$ then
$\omega$ satisfies the doubling condition.
\end{lemma}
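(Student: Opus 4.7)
The plan is to derive from the $A_p$ condition a power-type ratio estimate, and then apply it to a child interval, which has size at least $k/3$ by the construction described in Section~\ref{preli_er}.

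First I would fix $k\geq 2$ and $x\in X$, and for an arbitrary subset $E\subseteq\{0,1,\ldots,k-1\}$ I would apply H\"older's inequality with exponents $p$ and $p/(p-1)$ in the form
\[
\sharp E=\sum_{j\in E}\omega(T^j x)^{1/p}\,\omega(T^j x)^{-1/p}\leq\Big(\sum_{j\in E}\omega(T^j x)\Big)^{\!1/p}\!\Big(\sum_{j\in E}\omega(T^j x)^{-\frac{1}{p-1}}\Big)^{\!(p-1)/p}.
\]
Raising to the $p$th power, monotonicity of the second factor in $E$, and invoking $A_p$ to control the full sum
\[
\Big(\sum_{j=0}^{k-1}\omega(T^j x)^{-\frac{1}{p-1}}\Big)^{\!p-1}\leq\frac{Ck^p}{\sum_{j=0}^{k-1}\omega(T^j x)},
\]
yields after rearrangement the key inequality
\[
\frac{\sum_{j=0}^{k-1}\omega(T^j x)}{\sum_{j\in E}\omega(T^j x)}\leq C\Big(\frac{k}{\sharp E}\Big)^{\!p}.
\]

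Second, I would take $E=I_i$ for $i\in\{l,r\}$, the left or right child of $\{0,1,\ldots,k-1\}$. The bound $\sharp I/\sharp I_i\leq 3$ recorded just before Lemma \ref{gel_CZ} gives $k/\sharp I_i\leq 3$, so the previous estimate specializes to
\[
\Big(\sum_{j\in I_i}\omega(T^j x)\Big)^{\!-1}\Big(\sum_{j=0}^{k-1}\omega(T^j x)\Big)\leq 3^p C,
\]
which is precisely the doubling condition for $\omega$ (applied with $g=\omega$), with doubling constant $3^pC$.

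There is no real obstacle beyond bookkeeping: the key is simply the H\"older duality between $\omega$ and $\omega^{-1/(p-1)}$ that is built into the definition of $A_p$, together with the fact that each child of a dyadic interval in this setting captures a definite fraction of the parent. The same argument would, of course, give a doubling bound on any subset whose size is a fixed fraction of $k$, but for the statement of the lemma it is enough to restrict to children.
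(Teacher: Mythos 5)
Your proposal is correct and follows essentially the same route as the paper: Hölder's inequality applied to $\sharp E=\sum_{j\in E}\omega(T^jx)^{1/p}\omega(T^jx)^{-1/p}$, the $A_p$ bound to control $\bigl(\sum_{j=0}^{k-1}\omega^{-1/(p-1)}(T^jx)\bigr)^{p-1}$, and the lower bound $\sharp I_i/\sharp I\geq 1/3$ for a child interval. The paper merely arranges the same computation as a chain of inequalities for $\sharp I_i/\sharp I$ directly rather than first proving the estimate for a general subset $E$, and your constant $3^pC$ is in fact the correct one (the paper's stated $3^{1/p}C$ appears to be a typo).
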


\begin{proof}[Proof of Lemma \ref{lem-dou}] Let $I_i$ be children of $\{0,1,\ldots,k-1\}$ with $k\geq2.$ By H\"{o}lder's inequality and $\omega\in A_p,$ we have
\begin{eqnarray*}	
\frac{\sharp I_i}{\sharp I}&\leq&\frac{\left(\sum_{j \in I_{i}}\omega(T^{j} x)\right)^{\frac{1}{p}}\left(\sum_{j \in I_{i}}\omega^{-\frac{1}{p-1}}(T^{j}x)\right)^{\frac{1}{p\prime}}}{\sharp I}\\
&\leq&\left(\frac{\sum_{j \in I_{i}}\omega(T^{j} x)}{\sum_{j \in I}\omega(T^{j} x)}
\frac{\left(\sum_{j \in I}\omega(T^{j} x)\right)\left(\sum_{j \in I_{i}}\omega^{-\frac{1}{p-1}}(T^{j}x)\right)^{\frac{p}{p\prime}}}{(\sharp I)^p}\right)^{\frac{1}{p}}\\
&\leq&C^{\frac{1}{p}}\left(\frac{\sum_{j \in I_{i}}\omega(T^{j} x)}{\sum_{j \in I}\omega(T^{j} x)}
\right)^{\frac{1}{p}}.
\end{eqnarray*}	
It follow from $\frac{\sharp{I_i}}{\sharp{I}}\geq\frac{1}{3}$ that 
$$\sum_{j \in I}\omega(T^{j} x)\leq 3^{\frac{1}{p}}C\sum_{j \in I_{i}}\omega(T^{j} x).$$
\end{proof}

We now turn to Theorem $\ref{Thm_RH_AP}.$ 

\begin{theorem}\label{Thm_RH_AP} Let $\omega$ be a weight. If $\omega$ satisfies the doubling condition, then
$\cup_{q>1}RH_{q}\Rightarrow \cup_{p>1}A_p.$
\end{theorem}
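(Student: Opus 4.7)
The plan is to invoke Theorem \ref{Thm_equa_RH} with $g=\omega$ (permissible because $\omega$ is doubling by hypothesis) applied to the weight $\omega^{-1}$. Because $\omega^{-1}\cdot\omega\equiv 1$, the $\omega$-weighted averages of $\omega^{-1}$ simplify drastically, and the reverse Hölder inequality $\omega^{-1}\in\cup_{q>1}RH_q(\omega)$ that the theorem delivers will turn out, after elementary manipulation, to be exactly the $A_p$ condition for $\omega$ with $p=q/(q-1)$.

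First, I would check that the input $\omega^{-1}\in A^M_\infty(\omega)$ is at our disposal. Writing out condition \ref{Thm:b} of Theorem \ref{Thm_equa_RH} with $g=\omega$ and with $\omega^{-1}$ in place of the weight called $\omega$ in that theorem, the $\omega$-weighted sums involving $\omega^{-1}\cdot\omega$ collapse to sums of $1$'s, and the condition reads
\[\sum_{i\in A}\omega(T^ix)\le\alpha\sum_{i=0}^{k-1}\omega(T^ix)\ \Rightarrow\ \sharp A\le\beta k,\]
which is precisely $\omega\in\hat{A}^M_\infty$ as in condition \ref{sThm:e} of Theorem \ref{sThm_equa_RH}. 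Since $\omega\in\cup_{q>1}RH_q$ by hypothesis, Theorem \ref{sThm_equa_RH} yields $\omega\in\hat{A}^M_\infty$, so this input is in hand.

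Theorem \ref{Thm_equa_RH} (with $g=\omega$) now produces some $q>1$ and $C>0$ for which
\[\left(T^\omega_{0,k-1}\omega^{-q}(x)\right)^{1/q}\le C\,T^\omega_{0,k-1}\omega^{-1}(x).\]
Rewriting each side by absorbing the factor $\omega$ inside the numerator (so that $\omega^{-q}\cdot\omega=\omega^{1-q}$ on the left and $\omega^{-1}\cdot\omega=1$ on the right), then raising to the $q$-th power and collecting powers of $\sum_i\omega(T^ix)$, one obtains
\[T_{0,k-1}\omega(x)\cdot\bigl(T_{0,k-1}\omega^{-1/(p-1)}(x)\bigr)^{p-1}\le C^p\]
with the choice $p=q/(q-1)$, which is exactly $\omega\in A_p$.

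The only substantive step is the bookkeeping identification in the first displayed line: that $\omega^{-1}\in A^M_\infty(\omega)$ is literally the statement $\omega\in\hat{A}^M_\infty$. Once this observation is made, the result follows from Theorem \ref{Thm_equa_RH} together with routine algebra on averages, and the doubling hypothesis on $\omega$ enters exactly once, precisely to authorize the application of Theorem \ref{Thm_equa_RH} with $g=\omega$. I do not anticipate a serious obstacle; the content is a clean combinatorial duality between $RH_q$ for $\omega$ and $A_p$ for its Hölder conjugate, mediated by the characterizations already established.
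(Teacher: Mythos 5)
Your proposal is correct and follows essentially the same route as the paper: the identification $\omega\in\hat{A}^{M}_{\infty}\Leftrightarrow\omega^{-1}\in A^{M}_{\infty}(\omega)$ is precisely the paper's bridging observation, and the chain $\omega\in\cup_{q>1}RH_{q}\Leftrightarrow\omega\in\hat{A}^{M}_{\infty}\Leftrightarrow\omega^{-1}\in A^{M}_{\infty}(\omega)\Leftrightarrow\omega^{-1}\in\cup_{q>1}RH_{q}(\omega)$ via Theorems \ref{sThm_equa_RH} and \ref{Thm_equa_RH} (the latter with $g=\omega$, licensed by the doubling hypothesis) is exactly the paper's argument. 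The concluding algebra turning $\omega^{-1}\in RH_{q}(\omega)$ into the $A_p$ condition with $\frac{1}{p}+\frac{1}{q}=1$ also matches the paper's computation.
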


\begin{proof}[Proof of Theorem \ref{Thm_RH_AP}]
It follows from the definitions of $\hat{A}^{M}_{\infty}$ and $A^{M}_{\infty}(\omega)$ that
\begin{equation}
\label{eq_A_infty}\omega\in \hat{A}^{M}_{\infty}\Leftrightarrow\omega^{-1}\in A^{M}_{\infty}(\omega).
\end{equation}
Letting $\omega\in\cup_{q>1}RH_{q},$ we have
\begin{center}
\begin{figure}[H]
	\begin{tikzpicture}[scale=1]
	\draw (0,0) node {$\omega\in\cup_{q>1}RH_{q}$} -- (0,0);
     \draw (3,0) node {$\omega\in \hat{A}^{M}_{\infty}$} -- (3,0);
     \draw (6,0) node {$\omega^{-1}\in A^{M}_{\infty}(\omega)$} -- (6,0); 
     \draw (9.9,0) node {$\omega^{-1}\in \cup_{q>1}RH_{q}(\omega).$} -- (9.9,0); 
	\draw (1.7,0.5) node {Thm \ref{sThm_equa_RH}} -- (1.7,0.5);
     \draw (7.7,0.5) node {Thm \ref{Thm_equa_RH}} -- (7.7,0.5);
     \draw (4.3,0.5) node {\eqref{eq_A_infty}} -- (4.3,0.5);

    \draw[implies-implies,double equal sign distance, line width=0.25mm] (1.1,0) -- (2.3,0);
    \draw[implies-implies,double equal sign distance, line width=0.25mm] (3.7,0) -- (4.9,0);
    \draw[implies-implies,double equal sign distance, line width=0.25mm] (7.1,0) -- (8.3,0);
	\end{tikzpicture}
	\end{figure} 
\end{center}
Then $\omega^{-1}\in RH_{q}(\omega)$ gives that
\begin{eqnarray*}	
			&~&\left(\frac{1}{\sum_{i=0}^{k-1}\omega(T^{i} x)} \sum\limits_{i=0}^{k-1}\big(\omega^{-1}(T^{i} x)\big)^q\omega(T^{i} x)\right )^{\frac{1}{q}}\\
			&\leqslant& C \left(\frac{1}{\sum_{i=0}^{k-1}\omega(T^{i} x)} \sum\limits_{i=0}^{k-1}\big(\omega^{-1}(T^{i} x)\big)\omega(T^{i} x)\right ).
\end{eqnarray*}
Hence $$ \frac{1}{k}\sum_{i=0}^{k-1} \omega(T^{i} x)\left(\frac{1}{k} \sum_{i=0}^{k-1}
\omega^{-\frac{1}{p-1}}(T^{i} x)\right)^{p-1} \leq C^p,$$
where $\frac{1}{p}+\frac{1}{q}=1.$ This is $\omega\in A_p.$ 
Under the doubling assumption on the weight $\omega,$ we have shown that $\cup_{q>1}RH_{q}\Rightarrow \cup_{p>1}A_p.$
\end{proof}

Thus we conclude that the union of $A_p$ weights
has the following characterizations in Figure \ref{figure_all}.
\begin{center}
\begin{figure}[H]
	\begin{tikzpicture}[scale=0.9,auto]	
     \draw (0,5) node {\boxed{\cup_{p>1}A_p}} -- (0,5);
	\draw (2,3) node {\boxed{A^{exp}_{\infty}}} -- (2,3);
	\draw (2,1) node {\boxed{A^{SW}_{\infty}}} -- (2,1);
	\draw (4,1) node  {\boxed{ A^{med}_{\infty}}}-- (4,1);

\draw (4,3) node {\boxed{A^{avg}_{\infty}}} -- (4,3);
	\draw (6,5) node {\boxed{A^{\lambda}_{\infty}}} -- (6,5);
	\draw (9.5,5) node {\boxed{\cup_{q>1}RH_{q}}} -- (9.5,5);
    \draw (12,3) node  {\boxed{A^{CF}_{\infty}}}-- (12,3);
    \draw (10,1) node  {\boxed{ \hat{A}^{M}_{\infty}}}-- (10,1);
	\draw (6,1) node  {\boxed{ A^{M}_{\infty}}}-- (6,1);
	\draw (8,3) node {\boxed{A^{log}_{\infty}}} -- (8,3);

\draw[-implies, double equal sign distance, red, line width=0.25mm] (8.4,5.1)to [bend right=35] node [above] {Doubling  Condition}(0.9,5.1);

    \draw[-implies,double equal sign distance, line width=0.25mm] (0.9,4.6)--(1.8,3.5) ;  
   \draw[-implies,double equal sign distance, line width=0.25mm] (2.6,3)--(3.4,3) ;          
    \draw[implies-implies,double equal sign distance, line width=0.25mm] (2,1.5)--(2,2.5) ;  
    \draw[-implies,double equal sign distance, line width=0.25mm] (2.6,2.6)--(3.6,1.5) ; 
    \draw[-implies,double equal sign distance, line width=0.25mm] (4.7,1)--(5.5,1) ;    

\draw[blue, -implies,double equal sign distance, line width=0.25mm] (4,3.5) -- (5.5,4.9);
     \draw[blue, -implies,double equal sign distance, line width=0.25mm] (6.5,5) -- (8.4,5);
     \draw[blue, -implies,double equal sign distance, line width=0.25mm] (10.6,4.9) -- (12,3.5);
    \draw[blue, -implies,double equal sign distance, line width=0.25mm] (12,2.5) -- (10.6,1);
   \draw[blue, -implies,double equal sign distance, line width=0.25mm] (9.5,1) -- (6.5,1);
   \draw[blue, -implies,double equal sign distance, line width=0.25mm] (5.5,1.2) -- (4,2.5);    
   \draw[blue, -implies,double equal sign distance, line width=0.25mm] (9.5,4.5) -- (8.6,3.5);
   \draw[blue, -implies,double equal sign distance, line width=0.25mm] (7.6,2.5) -- (6.55,1.45);
	\end{tikzpicture}
\caption{Characterizations of $A_\infty$ Weights with the Doubling Condition}
	\label{figure_all}
	\end{figure}
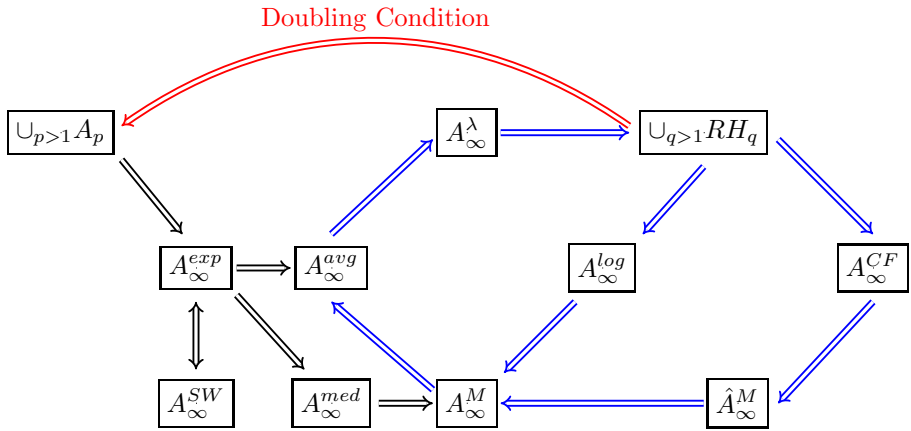
\end{center}

\bibliographystyle{alpha,amsplain}	
\begin{bibdiv}
\begin{biblist}

\bib{MR1021892}{article}{
      author={Andersen, Kenneth~F.},
       title={On the weighted integrability of maximal ergodic ratios},
        date={1990},
        ISSN={0002-9939,1088-6826},
     journal={Proc. Amer. Math. Soc.},
      volume={110},
      number={2},
       pages={319\ndash 323},
         url={https://doi.org/10.2307/2048073},
      review={\MR{1021892}},
}

\bib{MR0675431}{article}{
      author={Atencia, E.},
      author={de~la Torre, A.},
       title={A dominated ergodic estimate for {$L\sb{p}$} spaces with
  weights},
        date={1982},
        ISSN={0039-3223,1730-6337},
     journal={Studia Math.},
      volume={74},
      number={1},
       pages={35\ndash 47},
         url={https://doi.org/10.4064/sm-74-1-35-47},
      review={\MR{675431}},
}

\bib{MR713735}{article}{
      author={Atencia, E.},
      author={Mart\'in-Reyes, F.~J.},
       title={The maximal ergodic {H}ilbert transform with weights},
        date={1983},
        ISSN={0030-8730,1945-5844},
     journal={Pacific J. Math.},
      volume={108},
      number={2},
       pages={257\ndash 263},
         url={http://projecteuclid.org/euclid.pjm/1102720361},
      review={\MR{713735}},
}

\bib{MR782660}{article}{
      author={Atencia, E.},
      author={Mart\'in-Reyes, F.~J.},
       title={Weak type inequalities for the maximal ergodic function and the
  maximal ergodic {H}ilbert transform in weighted spaces},
        date={1984},
        ISSN={0039-3223,1730-6337},
     journal={Studia Math.},
      volume={78},
      number={3},
       pages={231\ndash 244},
         url={https://doi.org/10.4064/sm-78-3-231-244},
      review={\MR{782660}},
}

\bib{MR544802}{incollection}{
      author={Bonami, A.},
      author={L\'{e}pingle, D.},
       title={Fonction maximale et variation quadratique des martingales en
  pr\'{e}sence d'un poids},
        date={1979},
   booktitle={S\'{e}minaire de {P}robabilit\'{e}s, {XIII} ({U}niv.
  {S}trasbourg, {S}trasbourg, 1977/78)},
      series={Lecture Notes in Math.},
      volume={721},
   publisher={Springer, Berlin},
       pages={294\ndash 306},
      review={\MR{544802}},
}

\bib{MR3771482}{article}{
      author={Carro, Mar\'ia~J.},
      author={Lorente, Mar\'ia},
      author={Mart\'in-Reyes, Francisco~J.},
       title={A counting problem in ergodic theory and extrapolation for
  one-sided weights},
        date={2018},
        ISSN={0021-7670,1565-8538},
     journal={J. Anal. Math.},
      volume={134},
      number={1},
       pages={237\ndash 254},
         url={https://doi.org/10.1007/s11854-018-0008-0},
      review={\MR{3771482}},
}

\bib{MR358205}{article}{
      author={Coifman, R.~R.},
      author={Fefferman, C.},
       title={Weighted norm inequalities for maximal functions and singular
  integrals},
        date={1974},
        ISSN={0039-3223},
     journal={Studia Math.},
      volume={51},
       pages={241\ndash 250},
         url={https://doi.org/10.4064/sm-51-3-241-250},
      review={\MR{358205}},
}

\bib{MR3473651}{article}{
      author={Duoandikoetxea, Javier},
      author={Mart\'{\i}n-Reyes, Francisco~J.},
      author={Ombrosi, Sheldy},
       title={On the {$A_\infty$} conditions for general bases},
        date={2016},
        ISSN={0025-5874},
     journal={Math. Z.},
      volume={282},
      number={3-4},
       pages={955\ndash 972},
         url={https://doi.org/10.1007/s00209-015-1572-y},
      review={\MR{3473651}},
}

\bib{MR481968}{article}{
      author={Fujii, Nobuhiko},
       title={Weighted bounded mean oscillation and singular integrals},
        date={1977/78},
        ISSN={0025-5513},
     journal={Math. Japon.},
      volume={22},
      number={5},
       pages={529\ndash 534},
      review={\MR{481968}},
}

\bib{MR807149}{book}{
      author={Garc\'{\i}a-Cuerva, Jos\'{e}},
      author={Rubio~de Francia, Jos\'{e}~L.},
       title={Weighted norm inequalities and related topics},
      series={North-Holland Mathematics Studies},
   publisher={North-Holland Publishing Co., Amsterdam},
        date={1985},
      volume={116},
        ISBN={0-444-87804-1},
        note={Notas de Matem\'{a}tica [Mathematical Notes], 104},
      review={\MR{807149}},
}

\bib{MR402038}{article}{
      author={Gehring, F.~W.},
       title={The {$L^{p}$}-integrability of the partial derivatives of a
  quasiconformal mapping},
        date={1973},
        ISSN={0001-5962},
     journal={Acta Math.},
      volume={130},
       pages={265\ndash 277},
         url={https://doi.org/10.1007/BF02392268},
      review={\MR{402038}},
}

\bib{MR3243734}{book}{
      author={Grafakos, Loukas},
       title={Classical {F}ourier analysis},
     edition={Third},
      series={Graduate Texts in Mathematics},
   publisher={Springer, New York},
        date={2014},
      volume={249},
        ISBN={978-1-4939-1193-6; 978-1-4939-1194-3},
         url={https://doi.org/10.1007/978-1-4939-1194-3},
      review={\MR{3243734}},
}

\bib{MR727244}{article}{
      author={Hru\v{s}\v{c}ev, Sergei~V.},
       title={A description of weights satisfying the {$A_{\infty }$} condition
  of {M}uckenhoupt},
        date={1984},
        ISSN={0002-9939},
     journal={Proc. Amer. Math. Soc.},
      volume={90},
      number={2},
       pages={253\ndash 257},
         url={https://doi.org/10.2307/2045350},
      review={\MR{727244}},
}

\bib{MR436313}{article}{
      author={Izumisawa, M.},
      author={Kazamaki, N.},
       title={Weighted norm inequalities for martingales},
        date={1977},
        ISSN={0040-8735},
     journal={Tohoku Math. J. (2)},
      volume={29},
      number={1},
       pages={115\ndash 124},
         url={https://doi.org/10.2748/tmj/1178240700},
      review={\MR{436313}},
}

\bib{MR430208}{article}{
      author={Jones, Roger~L.},
       title={Inequalities for the ergodic maximal function},
        date={1977},
        ISSN={0039-3223,1730-6337},
     journal={Studia Math.},
      volume={60},
      number={2},
       pages={111\ndash 129},
         url={https://doi.org/10.4064/sm-60-2-111-129},
      review={\MR{430208}},
}

\bib{MR4745061}{article}{
      author={Ju, Jie},
      author={Chen, Wei},
      author={Cui, Jingya},
      author={Zhang, Chao},
       title={Characterizations of {$A_\infty$} weights in martingale spaces},
        date={2024},
        ISSN={1050-6926,1559-002X},
     journal={J. Geom. Anal.},
      volume={34},
      number={7},
       pages={Paper No. 224, 22},
         url={https://doi.org/10.1007/s12220-024-01674-x},
      review={\MR{4745061}},
}

\bib{2310.00370}{article}{
      author={Kinnunen, Juha},
      author={Myyryl{\"a}inen, Kim},
       title={Characterizations of parabolic reverse h\"{o} older classes},
        date={2023},
     journal={arXiv preprint arXiv:2310.00370},
}

\bib{MR4720961}{article}{
      author={Kinnunen, Juha},
      author={Myyryl\"ainen, Kim},
       title={Characterizations of parabolic {M}uckenhoupt classes},
        date={2024},
        ISSN={0001-8708,1090-2082},
     journal={Adv. Math.},
      volume={444},
       pages={Paper No. 109612, 57},
         url={https://doi.org/10.1016/j.aim.2024.109612},
      review={\MR{4720961}},
}

\bib{MR4446233}{article}{
      author={Kosz, Dariusz},
       title={{$A_\infty$} condition for general bases revisited: complete
  classification of definitions},
        date={2022},
        ISSN={0002-9939},
     journal={Proc. Amer. Math. Soc.},
      volume={150},
      number={9},
       pages={3831\ndash 3839},
         url={https://doi.org/10.1090/proc/16014},
      review={\MR{4446233}},
}

\bib{MR3951077}{article}{
      author={Krause, Ben},
      author={Lacey, Michael},
      author={Wierdl, M\'at\'e},
       title={On convergence of oscillatory ergodic {H}ilbert transforms},
        date={2019},
        ISSN={0022-2518,1943-5258},
     journal={Indiana Univ. Math. J.},
      volume={68},
      number={2},
       pages={641\ndash 662},
         url={https://doi.org/10.1512/iumj.2019.68.7615},
      review={\MR{3951077}},
}

\bib{MR3742545}{article}{
      author={Krause, Ben},
      author={Zorin-Kranich, Pavel},
       title={Weighted and vector-valued variational estimates for ergodic
  averages},
        date={2018},
        ISSN={0143-3857,1469-4417},
     journal={Ergodic Theory Dynam. Systems},
      volume={38},
      number={1},
       pages={244\ndash 256},
         url={https://doi.org/10.1017/etds.2016.27},
      review={\MR{3742545}},
}

\bib{MR837798}{article}{
      author={Mart\'in-Reyes, F.~J.},
       title={Inequalities for the ergodic maximal function and convergence of
  the averages in weighted {$L^p$}-spaces},
        date={1986},
        ISSN={0002-9947,1088-6850},
     journal={Trans. Amer. Math. Soc.},
      volume={296},
      number={1},
       pages={61\ndash 82},
         url={https://doi.org/10.2307/2000560},
      review={\MR{837798}},
}

\bib{MR958045}{article}{
      author={Mart\'in-Reyes, F.~J.},
      author={De~la Torre, A.},
       title={The dominated ergodic estimate for mean bounded, invertible,
  positive operators},
        date={1988},
        ISSN={0002-9939,1088-6826},
     journal={Proc. Amer. Math. Soc.},
      volume={104},
      number={1},
       pages={69\ndash 75},
         url={https://doi.org/10.2307/2047463},
      review={\MR{958045}},
}

\bib{MR293384}{article}{
      author={Muckenhoupt, Benjamin},
       title={Weighted norm inequalities for the {H}ardy maximal function},
        date={1972},
        ISSN={0002-9947},
     journal={Trans. Amer. Math. Soc.},
      volume={165},
       pages={207\ndash 226},
         url={https://doi.org/10.2307/1995882},
      review={\MR{293384}},
}

\bib{MR350297}{article}{
      author={Muckenhoupt, Benjamin},
       title={The equivalence of two conditions for weight functions},
        date={1973/74},
        ISSN={0039-3223},
     journal={Studia Math.},
      volume={49},
       pages={101\ndash 106},
         url={https://doi.org/10.4064/sm-49-2-101-106},
      review={\MR{350297}},
}

\bib{MR1881028}{article}{
      author={Orobitg, Joan},
      author={P\'erez, Carlos},
       title={{$A_p$} weights for nondoubling measures in {${\bf R}^n$} and
  applications},
        date={2002},
        ISSN={0002-9947,1088-6850},
     journal={Trans. Amer. Math. Soc.},
      volume={354},
      number={5},
       pages={2013\ndash 2033},
         url={https://doi.org/10.1090/S0002-9947-02-02922-7},
      review={\MR{1881028}},
}

\bib{MR160073}{article}{
      author={Rosenblum, Marvin},
       title={Summability of {F}ourier series in {$L\sp{p}(d\mu )$}},
        date={1962},
        ISSN={0002-9947,1088-6850},
     journal={Trans. Amer. Math. Soc.},
      volume={105},
       pages={32\ndash 42},
         url={https://doi.org/10.2307/1993918},
      review={\MR{160073}},
}

\bib{MR529683}{article}{
      author={Str\"{o}mberg, Jan-Olov},
       title={Bounded mean oscillation with {O}rlicz norms and duality of
  {H}ardy spaces},
        date={1979},
        ISSN={0022-2518},
     journal={Indiana Univ. Math. J.},
      volume={28},
      number={3},
       pages={511\ndash 544},
         url={https://doi.org/10.1512/iumj.1979.28.28037},
      review={\MR{529683}},
}

\bib{MR766221}{article}{
      author={Str\"omberg, Jan-Olov},
      author={Wheeden, Richard~L.},
       title={Fractional integrals on weighted {$H^p$} and {$L^p$} spaces},
        date={1985},
        ISSN={0002-9947,1088-6850},
     journal={Trans. Amer. Math. Soc.},
      volume={287},
      number={1},
       pages={293\ndash 321},
         url={https://doi.org/10.2307/2000412},
      review={\MR{766221}},
}

\end{biblist}
\end{bibdiv}

\end{document}